\providecommand{\U}[1]{\protect\rule{.1in}{.1in}}
\providecommand{\U}[1]{\protect\rule{.1in}{.1in}}
\providecommand{\U}[1]{\protect\rule{.1in}{.1in}}
\providecommand{\U}[1]{\protect\rule{.1in}{.1in}}
\providecommand{\U}[1]{\protect\rule{.1in}{.1in}}
\newcommand{\ulambda}{{\boldsymbol{\lambda}}}
\newcommand{\unu}{{\boldsymbol{\nu}}}
\newtheorem{Th}{Theorem}[section]
\newtheorem{lemma}[Th]{Lemma}
\newtheorem{Cor}[Th]{Corollary}
\newtheorem{Prop}[Th]{Proposition}
\theoremstyle{remark}
\newtheorem{Rem}[Th]{Remark}{\rmfamily}
\theoremstyle{definition}
\newtheorem{Def}[Th]{Definition}{\rmfamily}
{\rmfamily}
\def\cH{\mathcal{H}}
\def\C{\mathbb{C}}
\def\Z{\mathbb{Z}}
\DeclareRobustCommand\mmodels{\Relbar\joinrel\mathrel{|}}
\def\cP{\mathcal{P}}
\newcommand{\Mat}{\operatorname{Mat}}
\def\pos{\operatorname{pos}}
\def\te{\tilde{e}}
\def\tg{\tilde{g}}
\def\pos{\operatorname{pos}}
\def\te{\widetilde{e}}
\def\tg{\widetilde{g}}
\def\ta{\widetilde{a}}
\begin{document}

\title{Clifford theory for Yokonuma--Hecke algebras and deformation of complex reflection groups}
\author{N. Jacon and L. Poulain d'Andecy }
\maketitle
\date{}

\begin{abstract}
We define and study an action of the symmetric group on the Yokonuma--Hecke algebra. This leads to the definition of two classes of algebras. The first one is connected with the image of the algebra of the braid group inside the Yokonuma--Hecke algebras, and in turn with an algebra defined by Aicardi and Juyumaya known as the algebra of braids and ties. The second one can be seen as new deformations of complex reflection groups of type $G(d,p,n)$.  We provide several presentations for both algebras and a complete study of their representation theories using Clifford theory. 
\end{abstract}

\section{Introduction}

There are several known interesting ways to define an algebra deforming the group algebra  of the complex reflection group of type $G(d,1,n)$ (the wreath product $\mathbb{Z}/d\mathbb{Z}\wr \mathfrak{S}_n$ where $d,n\in \mathbb{N}_{>0}$).  
 The first and maybe the most famous way leads to the ``Ariki--Koike algebra'' whose representation theory has been intensively studied in the last past decades, both in the semisimple and in the  modular setting. Recently, 
 several works have been presented around another deformation of this group algebra: the Yokonuma--Hecke algebra $Y_{d,n}$. One reason of its recent interest comes from knot theory where the Yokonuma--Hecke algebra allows the construction of knots and links invariants (in the same spirit as in the case of the Hecke algebra of type $A$).

The aim of this paper is to define two new remarkable types of algebras using the Yokonuma--Hecke algebra, to study their structures and their representation theories. To do this, we use  a very natural action (by automorphisms) of  the symmetric group $\mathfrak{S}_d$  on the Yokonuma--Hecke algebra $Y_{d,n}$ and 
 we define our algebras to be the algebras of fixed points, first under the action of the full symmetric group $\mathfrak{S}_d$, and second under the action of a certain subgroup $\mathbb{Z}/p\mathbb{Z}$ of $\mathfrak{S}_d$  (where $p$ divides $d$).

We show that, over a suitable base ring, the subalgebra $Y_{d,n}^{\mathfrak{S}_d}$ of fixed points under the action of $\mathfrak{S}_d$ coincides with the image of the group algebra of the braid group inside the Yokonuma--Hecke algebra. In other words, it coincides with the subalgebra generated by the images of the braid generators. In particular, the subalgebra $Y_{d,n}^{\mathfrak{S}_d}$ is connected with an algebra defined in \cite{AJ} by A. Aicardi and J. Juyumaya which has  applications again in knot theory \cite{AJ2}. This algebra, called the algebra of braids and ties $BT_n$,  is defined as an abstract algebra by generators and relations. In \cite{Ry} and \cite{ER}, Espinoza and  Ryom-Hansen have shown that, when $d\geq n$, the algebra $BT_n$ is isomorphic to the subalgebra of $Y_{d,n}$ generated by the images of the braid generators. The authors also studied the representation theory of $BT_n$. They gave a construction of the irreducible representations in the semisimple case using a cellular structure. 

As a consequence of the results of this paper, we obtain that the algebra $BT_n$ is isomorphic to the algebra $Y_{d,n}^{\mathfrak{S}_d}$ of fixed points under the action of the symmetric group. We moreover show that the algebra $BT_n$ is isomorphic to the subalgebra generated by the images of the braid generators if and only if $d\geq n$ (in particular, we give a different proof of the ``if'' part using the description as a subalgebra of fixed points). 
Importantly, this approach allows  to study the representation theory, in both the semisimple and the modular situations, of the algebra $BT_n$ (as a particular case of $Y_{d,n}^{\mathfrak{S}_d}$) in a quite simple way using Clifford theory and the known representation theory of $Y_{d,n}$. This gives a more conceptual proof for 
 the parametrization result of the simple modules obtained in \cite{Ry} in the semisimple case, and a new parametrization result in the non semisimple one.

 The second class of algebras provide deformations of  the  group algebras of complex reflection groups of type $G(d,p,n)$, as we will explain now. 
As noted above, the Yokonuma--Hecke algebra can be seen as a deformation of  the complex reflection groups of type $G(d,1,n)$. The complex reflection groups of this type are particular cases of the unique infinite family $G(d,p,n)$ (where $p$ divides $d$) of irreducible complex reflection groups in the classification by Shephard and Todd. Thus,  it is natural to ask if one can define a new class of algebras deforming the whole infinite series  in the same spirit as  the Yokonuma--Hecke algebra deforms the group algebra of $G(d,1,n)$. We here show that such algebras naturally appear in our setting. To do this, 
  if $p$ is an integer dividing $d$, we consider the action of  the cyclic group $\mathbb{Z}/p\mathbb{Z}$ (as a subgroup of $\mathfrak{S}_d$)  by automorphisms on  $Y_{d,n}$. We then show that the subalgebra of fixed points $Y_{d,n}^{\mathbb{Z}/p\mathbb{Z}}$ can be interpreted as a (new) deformation of the group algebra of the complex reflection group of type $G(d,p,n)$, thus generalizing the known description of $G(d,p,n)$ as a subgroup of $G(d,1,n)$. In particular, for $p=d=2$, we obtain a deformation of the group algebra of the Weyl group of type $D_n$, and we will provide in this case a rather simple explicit presentation. 
  
We study the structure of the algebra $Y_{d,n}^{\mathbb{Z}/p\mathbb{Z}}$ by giving several bases and presentations of it. In particular, we show that this deformation can be described as a quotient of the braid group of type $G(d,p,n)$ over certain relations that we give explicitly. We moreover study its representation theory, again in both the semisimple and modular cases using Clifford theory. We note that, as for the Yokonuma--Hecke algebra in the $G(d,1,n)$ situation, a parametrization of the simple modules is easier to obtain than for the usual Hecke algebras associated to these complex reflection groups. 
     
     The paper will be organized as follows. We describe the action of the symmetric group $\mathfrak{S}_d$ on the Yokonuma--Hecke algebra in Section 2. This action is then intensively used in the rest of the paper.
      In Section 3, we recall several aspects of the representation theory of the Yokonuma--Hecke algebra. We  then use Clifford theory to 
  obtain a parametrization result of the simple modules for  the subalgebra of fixed points of $Y_{d,n}$ under the action of an arbitrary  subgroup of $\mathfrak{S}_d$. 
  The two last parts are then devoted to the study of the subalgebra of fixed points in two particular cases. Section 4 concerns the case of the full symmetric group, while Section 5 deals with the subgroups $\mathbb{Z}/p\mathbb{Z}$. \\
  \\
  {\bf Acknowledgement.} The first author is supported by Agence National de la Recherche Project ACORT ANR-12-JS01-0003.

\section{Yokonuma--Hecke algebras and action of the symmetric group}

Let  $d,n \in\Z_{>0}$ and $q$ be an indeterminate. In this section, we work over the ring $\C[q,q^{-1}]$. We here recall the definition of the Yokonuma--Hecke algebras $Y_{d,n}$ and some of the properties we need in the following. 
 We end the section with the definition of the action of the symmetric group $\mathfrak{S}_d$ on $Y_{d,n}$. 

\subsection{Definitions}

We denote $\mathfrak{S}_n$ the symmetric group on $n$ letters and $\pi_j$, for $j\in \{1,\ldots,n-1\}$ the transposition $(j,j+1)\in \mathfrak{S}_n$. The Yokonuma--Hecke algebra $Y_{d,n}$ is the associative $\C[q,q^{-1}]$-algebra with generators
$$g_1,\ldots,g_{n-1},t_1,\ldots,t_n,$$
subject to the following defining relations (\ref{def-1})--(\ref{def-2}):
\begin{equation}\label{def-1}
\begin{array}{rclcl}
g_ig_j & = & g_jg_i &\quad & \text{for $i,j\in\{1,\ldots,n-1\}$ such that $\vert i-j\vert > 1$\,,}\\[0.2em]
g_ig_{i+1}g_i & = & g_{i+1}g_ig_{i+1} && \text{for $i\in\{1,\ldots,n-2\}$\,,}\\[0.2em]
g_i^2  & = & 1 + (q-q^{-1}) \, e_{i} \, g_i &\quad& \text{for $i\in\{1,\ldots,n-1\}$\,,}
\end{array}
\end{equation}
\begin{equation}\label{def-2}
\hspace{0.0cm}\begin{array}{rclcl}
t_it_j & =  & t_jt_i &\qquad\ \ \,&  \text{for $i,j\in\{1,\ldots,n\}$\,,}\\[0.1em]
g_it_j & = & t_{\pi_i(j)}g_i && \text{for $i\in\{1,\ldots,n-1\}$ and $j\in\{1,\ldots,n\}$\,,}\\[0.1em]
t_j^d   & =  &  1 && \text{for $j\in\{1,\ldots,n\}$\,,}
\end{array}
\end{equation}
where $e_i :=\displaystyle\frac{1}{d}\sum_{1\leq s\leq d}t_i^s t_{i+1}^{-s}$ for $i=1,\ldots,n-1$. The elements $e_i$ are idempotents and we have:
\begin{equation}
g_i^{-1} =  g_i - (q-q^{-1}) e_i  \qquad \text{for all $i=1,\ldots,n-1$\,.}
\end{equation}
We note also the following easily verified characteristic equation for the generators $g_i$:
\begin{equation}\label{char-g}
(g_i^2-1)(g_i^2-(q-q^{-1})g_i-1)=0\ , \qquad \text{for all $i=1,\ldots,n-1$\,.}
\end{equation}
Let $w\in\mathfrak{S}_n$ and $\pi_{i_1}\dots \pi_{i_k}$ a reduced expression for $w$. We define $g_w:=g_{i_1}\dots g_{i_k}$. Then the following set forms a basis of $Y_{d,n}$ \cite{ju2}:
\begin{equation}\label{basis-t}
\{\ t_1^{\alpha_1}\dots t_n^{\alpha_n}g_w\ |\ \alpha_1,\dots,\alpha_n\in\{1,\dots,d\}\,,\ w\in\mathfrak{S}_n\ \}\ .
\end{equation}
As a consequence, the commutative subalgebra $\mathcal{T}_{d,n}:=\langle t_1,\ldots, t_n\rangle$ of $Y_{d,n}$ generated by $t_1,\dots,t_n$ is isomorphic to the group algebra of $(\mathbb{Z}/d\mathbb{Z})^n$ over $\C[q,q^{-1}]$.

\subsection{Another presentation of $Y_{d,n}$}

\paragraph{Ordered partitions of $\{1,\dots,n\}$.} For the definition below, note that for $I=(I_1,\dots,I_d)\in\cP_d(n)$, some of the subsets $I_1,\dots,I_d$ are allowed to be empty. We emphasize that the ordering of the subsets in $I$ is relevant.
\begin{Def} An \emph{ordered partition} of $\{1,\dots,n\}$ is a multiplet $(I_1,\dots,I_d)$ of $d$ pairwise disjoint subsets of $\{1,\dots,n\}$ such that $I_1\cup\dots\cup I_d=\{1,\dots,n\}$.
We denote by $\cP_d(n)$ the set of ordered partitions of $\{1,\dots,n\}$ into $d$ parts. 
\end{Def}
Let $I=(I_1,\dots,I_d)\in\cP_d(n)$ and $j\in\{1,\dots,n\}$. We define the ``position'' of $j$ in $I$ by
\begin{equation}\label{pos}
\pos_j(I):=a\,,\ \ \ \ \ \text{if $j\in I_a$.}
\end{equation}

Let $I=(I_1,\dots,I_d)\in\cP_d(n)$. We consider the following two natural actions of symmetric groups $\mathfrak{S}_n$ and $\mathfrak{S}_d$  on the set $\cP_d(n)$:
\begin{itemize}
\item Let $\pi\in\mathfrak{S}_n$. We set:
\[\pi(I):=\bigl(\pi(I_1),\dots,\pi(I_d)\bigr)\ ,\]
where $\pi(S):=\{\pi(s)\ |\ s\in S\}$ for any $S\subset\{1,\dots,n\}$.
\item  Let $\sigma\in\mathfrak{S}_d$. We set:
\[I^{\sigma}:=\bigl(I_{\sigma^{-1}(1)},\dots,I_{\sigma^{-1}(d)}\bigr)\ .\]
\end{itemize}

We note that, for any $i\in\{1,\dots,n\}$ and $\sigma\in\mathfrak{S}_d$, we have (recall that $\pos_i(I)\in\{1,\dots,d\}$ by definition):
\begin{equation}\label{form-pos}
\pos_i(I^{\sigma})=\sigma\bigl(\pos_i(I)\bigr)\ .
\end{equation}

\paragraph{Characters of $\mathcal{T}_{d,n}$.} 

Recall that the subalgebra $\mathcal{T}_{d,n}=\langle t_1,\dots,t_n\rangle$ of $Y_{d,n}$ is isomorphic to the group algebra of $(\Z/d\Z)^n$. We will implicitly make and use this identification in all the following.

Let $\{\xi_1,\dots,\xi_d\}$ be the set of complex roots of unity of order $d$
 such that $\xi_1:=\textrm{exp}(2i\pi/d)$ and $\xi_j=\xi_1^j$ for $j=1,\ldots,d$. 
 An irreducible  complex character $\chi$ of the subalgebra $\mathcal{T}_{d,n}$ is determined by the choice of $\chi (t_j)\in \{\xi_1,\ldots ,\xi_d \}$ for each $j=1,\ldots,n$. 

The set of irreducible  complex characters of $\mathcal{T}_{d,n}$ is in bijection with the set $\cP_d(n)$. The bijection is given by associating to a character $\chi$ the ordered partition $I(\chi)\in\cP_d(n)$ defined by
\begin{equation}\label{def-Ichi}
I(\chi)=(I(\chi)_1,\dots,I(\chi)_d)\,,\ \ \ \ \ \ \text{where}\ I(\chi)_a:=\{j\in \{1,\ldots,n\}\ |\ \chi (t_j)=\xi_a\}\ .
\end{equation}
Furthermore, the symmetric group $\mathfrak{S}_n$ acts naturally on the set of irreducible complex characters of  $\mathcal{T}_{d,n}$. For $\pi\in\mathfrak{S}_n$, the character $\pi(\chi)$ is defined by $\pi(\chi)\bigl(t_j\bigr):=\chi(t_{\pi^{-1}(j)})$, for $j=1,\dots,n$. It is straightforward to check that this action corresponds to the action of $\mathfrak{S}_n$ on $\cP_d(n)$ given above in the following sense: the character $\pi(\chi)$ corresponds to $\pi(I)$ if $\chi$ corresponds to the partition $I$ (that is, $\pi\bigl(I(\chi)\bigr)=I\bigl(\pi(\chi)\bigr)$).

Now, for $\chi$ an irreducible  complex character of $\mathcal{T}_{d,n}$, let $E_{\chi}$ be the corresponding primitive idempotent. A formula for $E_{\chi}$ is:
\begin{equation}\label{form-E}
E_{\chi} =\prod_{1\leq i\leq n}\Bigl(\frac{1}{d}\sum_{1\leq s \leq d}\chi(t_i)^st_i^{-s}\Bigr)\ .
\end{equation}
The primitive idempotents $E_{\chi}$, where $\chi$ runs over the set of irreducible complex characters, form a complete set of pairwise orthogonal idempotents of $\mathcal{T}_{d,n}$. In particular, they form a basis of the subalgebra $\mathcal{T}_{d,n}$.

Therefore, for $i\in\{1,\dots,n-1\}$, the element $e_i$ can be expressed in terms of the primitive idempotents $E_{\chi}$. Explicitly, since we have $\chi(e_i)=1$ if $\chi(t_i)=\chi(t_{i+1})$ and $0$ otherwise, we obtain
\begin{equation}\label{form-e}
e_i=\sum_{\chi(t_i)=\chi(t_{i+1})}  E_{\chi}\ ,
\end{equation}
where the sum is over all irreducible complex characters $\chi$ such that $\chi(t_i)=\chi(t_{i+1})$.

\paragraph{Another presentation of $Y_{d,n}$.} For $I\in\cP_d(n)$, we define $E_I:=E_{\chi}\,$, where $\chi$ is the character such that $I=I(\chi)$. From the knowledge of the basis (\ref{basis-t}) of $Y_{d,n}$, it follows that the following set also forms a basis of $Y_{d,n}$:
\begin{equation}\label{basis-E}
\{\ E_Ig_w\ |\ I\in\cP_d(n)\,,\ w\in\mathfrak{S}_n\ \}\ .
\end{equation}
In particular, the algebra $Y_{d,n}$ is generated by the elements
\begin{equation}\label{gen2}
g_1,\dots,g_{n-1}\ \ \ \ \text{and}\ \ \ \ E_I\,,\ \ I\in\cP_d(n)\ .
\end{equation}
A set of defining relations is (\ref{def-1}) together with:
\begin{equation}\label{def-2b}
\hspace{-1.2cm}\begin{array}{rclcl}
E_IE_J & =  & \delta_{I,J}E_I &\qquad&  \text{for $I,J\in\cP_d(n)$\,,}\\[0.1em]
g_iE_I & = & E_{\pi_i(I)}g_i && \text{for $i\in\{1,\ldots,n-1\}$ and $I\in\cP_d(n)$\,,}\\[0.1em]
\sum_{I\in\cP_d(n)} E_I & = & 1\ , &&
\end{array}
\end{equation}
where, following (\ref{form-e}), we have now, for $i=1,\dots,n-1$,
\begin{equation}\label{form-e2}
e_i=\sum_{\text{\scriptsize{$\begin{array}{c}I\in\cP_d(n) \\
\pos_i(I)=\pos_{i+1}(I)
\end{array}$}}}\!\!\!\!E_I\ ,
\end{equation}
where $\pos_j(I)$ is the position defined in (\ref{pos}). Relations (\ref{def-2b}) are obviously satisfied in $Y_{d,n}$\,. The stronger assertion that they form with (\ref{def-1}) a set of defining relations follows easily from the fact that they allow to write any product of the generators (\ref{gen2}) as a linear combination of the basis elements (\ref{basis-E}).

\subsection{Action of $\mathfrak{S}_d$ on $Y_{d,n}$}\label{ect}

Let $\sigma\in\mathfrak{S}_d$. We define a linear map $Y_{d,n}\ni x\mapsto x^{\sigma}\in Y_{d,n}$ by extending linearly the following formula:
\begin{equation}\label{action}
(E_Ig_w)^{\sigma}:=E_{I^{\sigma}}g_w\,,\ \ \ \quad\text{for $I\in\cP_d(n)$ and $w\in\mathfrak{S}_n$.}
\end{equation}
\begin{Prop} The map (\ref{action}) induces  an action of $\mathfrak{S}_d$ on $Y_{d,n}$ by automorphisms of algebras.
\end{Prop}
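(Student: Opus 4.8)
The plan is to verify that the assignment $x \mapsto x^\sigma$ is a well-defined algebra homomorphism for each $\sigma \in \mathfrak{S}_d$, and that $\sigma \mapsto (x \mapsto x^\sigma)$ is a group anti-homomorphism (or homomorphism, depending on the convention fixed by $I^\sigma := (I_{\sigma^{-1}(1)},\dots,I_{\sigma^{-1}(d)})$) with values in $\operatorname{Aut}(Y_{d,n})$. Since the map is defined on the basis $\{E_I g_w\}$ of (\ref{basis-E}), linearity and well-definedness as a linear map are automatic; the content is multiplicativity. The cleanest route is to use the presentation of $Y_{d,n}$ by generators (\ref{gen2}) and relations (\ref{def-1}) together with (\ref{def-2b}): I will exhibit, for each generator, its image under the map, check these images satisfy the same defining relations, and then observe that the resulting algebra endomorphism agrees with (\ref{action}) on the basis.

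First I would compute the image of the generators. From (\ref{action}) applied with $w = 1$ we get $E_I^\sigma = E_{I^\sigma}$, and applied with $I$ chosen so that $E_I$ appears in the idempotent decomposition of $1$... more directly, since $\sum_I E_I = 1$ we get $g_w^\sigma = (\sum_I E_I g_w)^\sigma = \sum_I E_{I^\sigma} g_w = (\sum_J E_J) g_w = g_w$, so in particular $g_i^\sigma = g_i$ for all $i$. Thus the candidate map fixes all $g_i$ and sends $E_I \mapsto E_{I^\sigma}$. Now I check the relations of (\ref{def-1}) and (\ref{def-2b}): the braid and commutation relations among the $g_i$ are trivially preserved since the $g_i$ are fixed; the quadratic relation $g_i^2 = 1 + (q - q^{-1}) e_i g_i$ requires checking that $e_i^\sigma = e_i$, which follows from (\ref{form-e2}) because the condition $\pos_i(I) = \pos_{i+1}(I)$ is equivalent, via (\ref{form-pos}), to $\pos_i(I^\sigma) = \pos_{i+1}(I^\sigma)$, so the map $I \mapsto I^\sigma$ permutes the index set of that sum. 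For (\ref{def-2b}): $E_I^\sigma E_J^\sigma = E_{I^\sigma} E_{J^\sigma} = \delta_{I^\sigma, J^\sigma} E_{I^\sigma} = \delta_{I,J} E_{I^\sigma} = (E_I E_J)^\sigma$ since $I \mapsto I^\sigma$ is a bijection of $\cP_d(n)$; $\sum_I E_I^\sigma = \sum_I E_{I^\sigma} = \sum_J E_J = 1$; and $g_i E_I \mapsto g_i E_{I^\sigma}$ while $E_{\pi_i(I)} g_i \mapsto E_{\pi_i(I)^\sigma} g_i$, so I need $\pi_i(I)^\sigma = \pi_i(I^\sigma)$, i.e. the $\mathfrak{S}_n$-action and the $\mathfrak{S}_d$-action on $\cP_d(n)$ commute — which is clear from the definitions since one acts on the entries of the subsets and the other permutes the slots. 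This shows each $x \mapsto x^\sigma$ is a well-defined algebra endomorphism.

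Finally I would check the group action axioms. Since $I \mapsto I^\sigma$ satisfies $(I^\sigma)^\tau = I^{\tau\sigma}$ (a quick check: the slot-$a$ entry of $(I^\sigma)^\tau$ is the slot-$\tau^{-1}(a)$ entry of $I^\sigma$, namely $I_{\sigma^{-1}\tau^{-1}(a)} = I_{(\tau\sigma)^{-1}(a)}$), and the identity of $\mathfrak{S}_d$ acts as the identity, we get $(x^\sigma)^\tau = x^{\tau\sigma}$ and $x^1 = x$ on basis elements, hence everywhere by linearity; in particular each $x \mapsto x^\sigma$ has inverse $x \mapsto x^{\sigma^{-1}}$ and is therefore an automorphism, and $\sigma \mapsto (x \mapsto x^\sigma)$ is a genuine (left or right, per the convention) action of $\mathfrak{S}_d$ on $Y_{d,n}$ by algebra automorphisms. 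The only mildly delicate point — the ``main obstacle,'' though it is more bookkeeping than difficulty — is making sure the compatibility $\pi_i(I)^\sigma = \pi_i(I^\sigma)$ and the idempotent-preservation $e_i^\sigma = e_i$ are stated and used correctly; everything else is formal manipulation with the presentation (\ref{def-1})–(\ref{def-2b}) and the bijectivity of $I \mapsto I^\sigma$ on $\cP_d(n)$.
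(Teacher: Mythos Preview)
Your proof is correct and follows essentially the same approach as the paper: both verify that the assignment $g_i\mapsto g_i$, $E_I\mapsto E_{I^\sigma}$ preserves the defining relations (\ref{def-1}) and (\ref{def-2b}), using $e_i^\sigma=e_i$ (via (\ref{form-e2}) and (\ref{form-pos})), the commutation $\pi_i(I)^\sigma=\pi_i(I^\sigma)$ of the two actions on $\cP_d(n)$, and the bijectivity of $I\mapsto I^\sigma$. Your explicit verification of $(I^\sigma)^\tau=I^{\tau\sigma}$ is a small addition over the paper, which simply invokes that $I\mapsto I^\sigma$ is an action.
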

\begin{proof} Let $\sigma\in\mathfrak{S}_d$. The map (\ref{action}) is given on the generators of $Y_{d,n}$ by
\begin{equation}\label{action-gen}
E_I^{\sigma}:=E_{I^{\sigma}}\,,\ \ \text{for $I\in\cP_d(n)$,}\ \ \ \ \ \text{and}\ \ \ \ \ g_i^{\sigma}:=g_i\,,\ \ \text{for $i=1,\dots,n-1$.}
\end{equation}
We shall check that the map $x\mapsto x^{\sigma}$ is an algebra automorphism of $Y_{d,n}$, by verfying  that the  formulas (\ref{action-gen}) preserve the defining relations (\ref{def-1}) and (\ref{def-2b}). 

We first consider  the defining relations (\ref{def-1}). For the braid relations, there is nothing to check. For the quadratic relation, it is enough to note that $e_i^{\sigma}=e_i$. This follows directly from Formula (\ref{form-e2}) together with the obvious fact that $\pos_i(I^{\sigma})=\pos_{i+1}(I^{\sigma})$ if and only if $\pos_i(I)=\pos_{i+1}(I)$. Now, it remains to consider the defining relations (\ref{def-2b}).
\begin{itemize}
\item First, we note that, for $I,J\in\cP_d(n)$, we have $I^{\sigma}=J^{\sigma}$ if and only if $I=J$ (all the stabilizers of the action of $\mathfrak{S}_d$ on $\cP_d(n)$ are trivial). This gives  the first line of (\ref{def-2b}).
\item For the second line, it is enough to remark that the two actions of $\mathfrak{S}_n$ and $\mathfrak{S}_d$ on $\cP_d(n)$ commute, and therefore $\pi_i(I^{\sigma})=\pi_i(I)^{\sigma}$ for any $i\in\{1,\ldots,n-1\}$ and $I\in\cP_d(n)$.
\item For the third line of (\ref{def-2b}), we have obviously $\sum_{I\in\cP_d(n)} E_{I^{\sigma}}=\sum_{I\in\cP_d(n)} E_I$.
\end{itemize}
The proof of the proposition is concluded by noticing that Formulas (\ref{action}), for $\sigma \in\mathfrak{S}_d$, indeed define an action of $\mathfrak{S}_d$ on $Y_{d,n}$ since $I\mapsto I^{\sigma}$ is an action of $\mathfrak{S}_d$ on $\cP_d(n)$.
\end{proof}

For any subgroup $G$ of $\mathfrak{S}_d$, 
 $G$ acts on $Y_{d,n}$ by automorphisms.
 We denote $Y_{d,n}^{G}$ the subalgebra of $Y_{d,n}$ of fixed points under the action of $G$. Namely, the subalgebra $Y_{d,n}^{G}$ is formed by elements $x$ of $Y_{d,n}$ satisfying $x^{\sigma}=x$ for any $\sigma\in G$.

\section{Representation theory  of $Y_{d,n}$}\label{si}
In this part, we study the representation theory of the Yokonuma--Hecke algebra using an isomorphism theorem that we first recall. Let $\theta : \mathbb{C}[q,q^{-1}] \to \mathbb{C}$ be a specialization. Our results will concern the algebras $\mathbb{C}_{\theta} Y_{d,n}:=\mathbb{C} \otimes_{\mathbb{C}[q,q^{-1}]} Y_{d,n}$
 and $\mathbb{C}_{\theta}Y_{d,n}^{G}:=\mathbb{C}\otimes_{\mathbb{C}[q,q^{-1}]} Y_{d,n}^G$ for $G$, an arbitrary  subgroup of $\mathfrak{S}_d$.
 
\subsection{An isomorphism theorem}
\paragraph{Compositions of $n$.} Let $\operatorname{Comp}_d (n)$ be the set of {\it $d$-compositions} of $n$, that is the set of $d$-tuples $\mu=(\mu_1,\ldots,\mu_d)\in\Z_{\geq0}^d$ such that $\sum_{1\leq a\leq d} \mu_a =n$. We denote $\mu\mmodels_d n$.

For $\mu\mmodels_d n$, the Young subgroup $\mathfrak{S}^{\mu}$ is the subgroup $\mathfrak{S}_{\mu_1}\times\dots\times\mathfrak{S}_{\mu_d}$ of  $\mathfrak{S}_{n}$, where $\mathfrak{S}_{\mu_1}$ acts on the letters $\{1,\dots,\mu_1\}$, $\mathfrak{S}_{\mu_2}$ acts on the letters $\{\mu_1+1,\dots,\mu_2\}$, and so on.

For $\mu=(\mu_1,\ldots,\mu_d)\mmodels_d n$, we define $\cP(\mu)$ to be the subset of $\cP_d(n)$ consisting of ordered partitions $(I_1,\dots,I_d)$ satisfying in addition $|I_a|=\mu_a$ for $a=1,\dots,d$. 

Finally, the natural action of $\mathfrak{S}_d$ on the set of compositions of $n$ into $d$ parts is given by 
$$\mu^{\sigma}:=(\mu_{\sigma^{-1}(1)},\dots,\mu_{\sigma^{-1}(d)})$$
 if $\mu=(\mu_1,\dots,\mu_d)\mmodels_d n$. We then have naturally $I^{\sigma}\in\cP(\mu^{\sigma})$ if $I\in\cP(\mu)$. We set
\[m_{\mu}:=\vert \cP(\mu)\vert=\frac{n!}{\mu_1!\mu_2!\dots\mu_d!}\ .\]

\paragraph{Hecke algebra.} By definition, the Iwahori--Hecke algebra $\mathcal{H}_n$ over $\mathbb{C}[q,q^{-1}]$ has a presentation by generators $T_1,\ldots,T_{n-1}$ and relations 
\begin{equation}
\begin{array}{rclcl}
T_iT_j & = & T_jT_i &\quad & \text{for $i,j\in\{1,\ldots,n-1\}$ such that $\vert i-j\vert > 1$\,,}\\[0.2em]
T_iT_{i+1}T_i & = & T_{i+1}T_iT_{i+1} && \text{for $i\in\{1,\ldots,n-2\}$\,,}\\[0.2em]
T_i^2  & = & 1 + (q-q^{-1})T_i &\quad& \text{for $i\in\{1,\ldots,n-1\}$\,,}
\end{array}
\end{equation}
If $\mu\mmodels_d n$, we denote by $\mathcal{H}^{\mu}$ the algebra $\mathcal{H}_{\mu_1}\otimes \ldots \otimes  \mathcal{H}_{\mu_d}$. As usual, if $\theta : \mathbb{C}[q,q^{-1}] \to \mathbb{C}$ is a specialization, we denote by  $\mathbb{C}_{\theta} \mathcal{H}_{n}$ and 
 $\mathbb{C}_{\theta} \mathcal{H}^{\mu}$ the associated specialized algebras.

\paragraph{Isomorphism.} In \cite{JPA}, an algebra isomorphism 
\begin{equation}\label{iso}
\Psi_{d,n}\ :\ Y_{d,n}\ \to\ \bigoplus_{\mu \mmodels_d n} \Mat_{m_{\mu}} (\mathcal{H}^{\mu})
\end{equation}
is given. Such an isomorphism has been previously obtained in \cite{Lu}, even in a more general setting.  For $x\in Y_{d,n}$, the element $\Psi_{d,n}(x)$ is thus a collection of matrices indexed by compositions of $n$ into $d$ parts. Moreover, rows and columns of a matrix in $\Mat_{m_{\mu}} ( \mathcal{H}^{\mu})$ are indexed by the set $\cP(\mu)$. Thus, the element $\Psi_{d,n}(x)$ is given by:
$$\left( (\Psi_{d,n}(x)_{I,J})_{I,J\in\cP(\mu)}  \right)_{\mu\mmodels_d n}.$$

We will give the images by $\Psi_{d,n}$ of the generators $E_I$, $I\in\cP_d(n)$, and $g_1,\dots,g_{n-1}$, by giving their non-zero coefficients.

First, let $I\in\cP_d(n)$. Let $\mu\mmodels_d n$ be the composition such that $I\in\cP(\mu)$. Then, the only non-zero coefficient in $\Psi_{d,n}(E_I)$ is 
\[\Psi_{d,n}(E_I)_{I,I}=1\ ;\]
In other words, the image of $E_I$ is the elementary matrix in $\Mat_{m_{\mu}}\bigl(\mathcal{H}^{\mu}\bigr)$ with $1$ in the diagonal position corresponding to $I$.

Let $i\in\{1,\dots,n-1\}$ and recall that $\pi_i:=(i,i+1)$. The non-zero coefficients in $\Psi_{d,n}(g_i)$ are
\begin{equation}\label{iso-gi}
\Psi_{d,n}(g_i)_{I,\pi_i(I)}= \left\{\begin{array}{lll}
1 && \text{if $\pi_i(I)\neq I$\,,}\\[0.5em]
1\otimes \dots 1\otimes T_k\otimes 1\dots \otimes 1\quad && \text{if $\pi_i(I) = I$\,,}
\end{array}\right.
\end{equation}
where $I$ varies in $\cP_{d}(n)$, and, in the second line above, $T_k$ is in the $a$-th factor if $\pos_i(I)=a$ and $k$ is determined by $k=\sharp\{j\in I_a\ |\ j\leq i\}$.

\subsection{Representation theory} \label{subsec-rep}
\paragraph{Simple modules.} From the isomorphism (\ref{iso}), we have that any simple module for $\mathbb{C}_{\theta} Y_{d,n}$ is of the form
\begin{equation}\label{simple}
V=(V_1\otimes \ldots \otimes V_d)^{m_{\mu}}=(V_1\otimes \ldots \otimes V_d)\otimes W_{\mu} ,
\end{equation}
where $V_a$ is an irreducible module for $\mathbb{C}_{\theta}\cH_{\mu_a} $ for each $a\in\{1,\dots,d\}$, and $W_{\mu}$ is a vector space with basis $\{w_I\}_{I\in\cP(\mu)}$.

Let $(v_1\otimes\dots\otimes v_d)\otimes w_I$ be an arbitrary vector in $V$. Then the action of the generators of $\mathbb{C}_{\theta} Y_{d,n}$ is given as follows:
\begin{equation}\label{act-E}
E_I\bigl((v_1\otimes\dots\otimes v_d)\otimes w_I\bigr)=(v_1\otimes\dots\otimes v_d)\otimes w_I\ \ \quad\text{and}\quad\ \ E_J\bigl((v_1\otimes\dots\otimes v_d)\otimes w_I\bigr)=0\ \ \ \ \text{if $J\neq I$\,,}
\end{equation}
\begin{equation}\label{act-g}
g_i\bigl((v_1\otimes\dots\otimes v_d)\otimes w_I\bigr)=\left\{\begin{array}{lll}
(v_1\otimes\dots\otimes v_d)\otimes w_{\pi_i(I)} && \text{if $\pi_i(I)\neq I$\,,}\\[0.5em]
(v_1\otimes\dots\otimes T_k(v_a)\otimes\dots\otimes v_d)\otimes w_I && \text{if $\pi_i(I) = I$\,,}
\end{array}\right.
\end{equation}
where $a$ and $k$ are determined as in (\ref{iso-gi}).
\paragraph{Parametrization.}
   
Let us now study more precisely the representation theory of $Y_{d,n} $ using the isomorphism (\ref{iso}). This will be needed in the following. We also fix several useful additional notations. 
 To this extent, consider the Hecke algebra $\mathbb{C}_{\theta} \mathcal{H}_n$. Let us  denote :
 $$e:=
 \textrm{min} \{i>0\ |\ 1+\theta (q)^2+\ldots +\theta (q)^{2i-2}=0\}.$$
(we set $e=\infty$ if no such integer exist)

 Then, by Dipper-James (see \cite[Thm. 3.5.14]{GJ}) the simple $\mathbb{C}_{\theta} \mathcal{H}_n$-modules are labeled by 
  the set of $e$-regular partitions of rank $n$ which we denote by $\Lambda_n^e$ (with the convention that 
 $\Lambda_n^\infty=:\Lambda_n$ is the set of  all partitions of rank $n$). We thus have:
 $$\textrm{Irr} (\mathbb{C}_{\theta} \mathcal{H}_n)=\{ V^{\lambda} \ |\ \lambda\in \Lambda_n^{e} \}.$$ 
Now for $\mu=(\mu_1,\ldots,\mu_d)\in \operatorname{Comp}_d (n)$, let us denote 
$$\Lambda^e_{\mu}:=\{\ulambda=(\lambda^1,\ldots,\lambda^d)\ |\forall i\in\{1,\ldots,d\},\ \lambda^i\in \Lambda^e_{\mu_i}\}$$
For $\ulambda=(\lambda^1,\ldots,\lambda^d)\in \Lambda^e_{\mu}$, we also set :
  $$V^{\ulambda}=(V^{\lambda^1}\otimes \ldots \otimes V^{\lambda^d})\otimes W_{m_{\mu}}.$$
   In our situation, using the isomorphism (\ref{iso}), we get (see \cite[\S 4.1]{JPA}):
 $$\textrm{Irr} ( \mathbb{C}_{\theta} {Y}_{d,n} )=
  \{ V^{\ulambda}\ |\ \ulambda\in \Lambda_{\mu}^{e} ,\ \mu\in \operatorname{Comp}_d (n)\}.$$
  We denote by $y (\ulambda,e)$ the dimension of $V^{\ulambda}$ (it indeed only depends on $e$ and $\ulambda$). We have:
  $$y (\ulambda,e)=m_{\mu} .\prod_{1\leq i\leq d} y (\lambda^i,e ),$$
  where $\mu=(|\lambda^1|,\ldots,|\lambda^d|)$ and, for $i=1,\ldots,d$, the number  $y (\lambda^i,e)$ 
   is the dimension of the simple $\mathcal{H}_{\mu_i}$-module $V^{\lambda^i}$. 
Note that  if $e\in \mathbb{N}$, this dimension may be recursively computed using the LLT algorithm and the connection with the theory of canonical bases for Fock spaces (we refer to \cite[Ch. 4, 5]{GJ} for details.) 
In the case where $e=\infty$, all  partitions are $e$-regular. Moreover, 
 the dimension is   given in thise case by $n!$ divided by the product of the hook lengths of the 
  Young diagrams of the $\lambda^i$'s.

 \paragraph{Relation with cyclotomic Ariki-Koike algebras.}\label{iso?}
 The algebra $Y_{d,n}$ is a deformation of the complex reflection group of type $G(d,1,n)$ over $\mathbb{C}[q,q^{-1}]$. On the other hand, there exists 
  another well known one-parameter deformation of   $G(d,1,n)$ over $\mathbb{C}[q,q^{-1}]$: the cyclotomic (specialization of) Ariki--Koike algebra (see \cite[\S 5.5]{GJ}). Let $(m_1,\ldots,m_d)\in \mathbb{N}^d$ and set $\eta_d:=\textrm{exp}(2i\pi/d)$. The cyclotomic Ariki--Koike algebra $\mathbb{H}_{d,n}$ has a presentation by generators $T_0$, $T_1$, \ldots, $T_{n-1}$ and relations :
  \begin{equation}\label{defAK-1}
\begin{array}{rclcl}
T_iT_j & = & T_jT_i &\quad & \text{for $i,j\in\{1,\ldots,n-1\}$ such that $\vert i-j\vert > 1$\,,}\\[0.2em]
T_iT_{i+1}T_i & = & T_{i+1}T_iT_{i+1} && \text{for $i\in\{1,\ldots,n-2\}$\,,}\\[0.2em]
T_0 T_1 T_0 T_1 &=&T_1 T_0 T_1 T_0, & & \\
T_i^2  & = & 1 + (q-q^{-1}) \,  T_i &\quad& \text{for $i\in\{1,\ldots,n-1\}$\,,}
\end{array}
\end{equation}
together with the relation
$$(T_0-\eta_d^0 q^{2m_1}) (T_0-\eta_d^{1} q^{2m_2})
\ldots (T_0-\eta_d^{d-1} q^{2m_d})=0$$

 It is natural to ask if $Y_{d,n}$ and $\mathbb{H}_{d,n}$  are isomorphic over $\mathbb{C}[q,q^{-1}]$. They are both free with the same rank and moreover, with the specialization $\theta_1:\mathbb{C}[q,q^{-1}]\to \mathbb{C}$ such that $\theta_1 (q)=\pm 1$, we get two algebras
   $\mathbb{C}_{\theta_1} Y_{d,n}$ and $\mathbb{C}_{\theta_1} \mathbb{H}_{d,n}$ which are both isomorphic to the group algebra of the complex reflection group of type $G(d,1,n)$ over $\mathbb{C}$. 
 
To answer this question, we can consider a specialization $\theta : \mathbb{C}[q,q^{-1}] \to \mathbb{C}$ such that $\theta (q)^2$ is a primitive root of unity  of order $d>1$ and remark that the number of simple modules for both algebras are different in general.
 More precisely, thanks to Ariki's theory and the connections with the crystal basis theory for quantum groups,  several possible parametrizations for the set of  simple  $\mathbb{C}_{\theta} \mathbb{H}_{d,n}$-modules  are available.  One can use the parametrization obtained in \cite[\S 5]{GJ} to deduce that 
 the parametrization set of the simple modules of $\mathbb{C}_{\theta} Y_{d,n}$ that we get here  is strictly contained in the one obtained in   for $\mathbb{C}_{\theta} \mathbb{H}_{d,n}$ in the case where $n\geq d$.  As a  consequence, the two algebras  $ Y_{d,n}$ and $ \mathbb{H}_{d,n}$ cannot be isomorphic.

\paragraph{Action of $\mathfrak{S}_d$ on representations.} The action of $\mathfrak{S}_d$ on $Y_{d,n}$ is translated as an action of $\mathfrak{S}_d$ on the set of representations of $\mathbb{C}_{\theta} Y_{d,n}$. Let $\sigma\in\mathfrak{S}_d$ and $\rho\ :\ \mathbb{C}_{\theta} Y_{d,n}\to \text{End}_{\mathbb{C}} (V)$ be a representation of $\mathbb{C}_{\theta} Y_{d,n}$. Then $V^{\sigma}$ is the $\mathbb{C}_{\theta} Y_{d,n}$-module with the same underlying vector space as $V$ and with action given by $\rho^{\sigma}(x):=\rho(x^{\sigma})$ for $x\in \mathbb{C}_{\theta} Y_{d,n}$.

Let $V$ be a simple module as in (\ref{simple}) and let $\sigma\in\mathfrak{S}_d$. We define a linear map $\Phi_{\sigma}$ from the vector space $V$ to the vector space $ (V_{\sigma^{-1}(1)}\otimes \ldots \otimes V_{\sigma^{-1}(d)})\otimes W_{\mu^{\sigma}}\,$ by
\begin{equation}\label{act-Sd1}
\Phi_{\sigma}\ :\ (v_1\otimes\dots\otimes v_d)\otimes w_I \mapsto (v_{\sigma^{-1}(1)}\otimes \ldots \otimes v_{\sigma^{-1}(d)})\otimes w_{I^{\sigma}}\ .
\end{equation}
We note that $\Phi_{\sigma}\circ \Phi_{\sigma'}=\Phi_{\sigma\sigma'}$. 
 
\begin{lemma}
The map $\Phi_{\sigma}$ gives the following isomorphism of $\mathbb{C}_{\theta} Y_{d,n}$-modules:
\begin{equation}\label{act-Sd2}
V^{\sigma^{-1}}\cong (V_{\sigma^{-1}(1)}\otimes \ldots \otimes V_{\sigma^{-1}(d)})\otimes W_{\mu^{\sigma}}\ ,
\end{equation}
\end{lemma}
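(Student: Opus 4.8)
The plan is to verify directly that $\Phi_{\sigma}$ intertwines the action of $\mathbb{C}_{\theta} Y_{d,n}$ on the module $V^{\sigma^{-1}}$ with the action on $(V_{\sigma^{-1}(1)}\otimes\dots\otimes V_{\sigma^{-1}(d)})\otimes W_{\mu^{\sigma}}$ defined via the formulas (\ref{act-E})--(\ref{act-g}). Since $\Phi_{\sigma}$ is manifestly a linear bijection (its inverse is $\Phi_{\sigma^{-1}}$, by the relation $\Phi_{\sigma}\circ\Phi_{\sigma'}=\Phi_{\sigma\sigma'}$ noted just above), it suffices to check that for each generator $x$ of $Y_{d,n}$ we have $\Phi_{\sigma}\bigl(x\cdot_{V^{\sigma^{-1}}} \xi\bigr) = x\cdot \Phi_{\sigma}(\xi)$ for every basis vector $\xi=(v_1\otimes\dots\otimes v_d)\otimes w_I$, where the action on the left is $\rho^{\sigma^{-1}}(x)=\rho(x^{\sigma^{-1}})$. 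Because $Y_{d,n}$ is generated by the $E_I$ and the $g_i$ (see (\ref{gen2})), we only have to treat these two families.

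First I would handle the idempotents $E_J$. On $V^{\sigma^{-1}}$ the element $E_J$ acts as $\rho(E_J^{\sigma^{-1}})=\rho(E_{J^{\sigma^{-1}}})$ by (\ref{action-gen}), so by (\ref{act-E}) it sends $(v_1\otimes\dots\otimes v_d)\otimes w_I$ to itself if $J^{\sigma^{-1}}=I$, i.e.\ if $J=I^{\sigma}$, and to $0$ otherwise. Applying $\Phi_{\sigma}$ afterwards gives $(v_{\sigma^{-1}(1)}\otimes\dots\otimes v_{\sigma^{-1}(d)})\otimes w_{I^{\sigma}}$ when $J=I^{\sigma}$ and $0$ otherwise. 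On the other hand, $E_J$ acting on $\Phi_{\sigma}(\xi)=(v_{\sigma^{-1}(1)}\otimes\dots)\otimes w_{I^{\sigma}}$ via (\ref{act-E}) gives exactly the same: the vector itself precisely when $J=I^{\sigma}$. So the two sides agree.

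Next I would treat $g_i$. Here $g_i^{\sigma^{-1}}=g_i$ by (\ref{action-gen}), so $g_i$ acts on $V^{\sigma^{-1}}$ by the very same formula (\ref{act-g}) as on $V$: it sends $(v_1\otimes\dots\otimes v_d)\otimes w_I$ to $(v_1\otimes\dots\otimes v_d)\otimes w_{\pi_i(I)}$ if $\pi_i(I)\neq I$, and to $(v_1\otimes\dots\otimes T_k(v_a)\otimes\dots)\otimes w_I$ if $\pi_i(I)=I$, with $a=\pos_i(I)$ and $k=\sharp\{j\in I_a : j\le i\}$. Applying $\Phi_{\sigma}$ and comparing with the action of $g_i$ on $\Phi_{\sigma}(\xi)=(v_{\sigma^{-1}(1)}\otimes\dots\otimes v_{\sigma^{-1}(d)})\otimes w_{I^{\sigma}}$, the two combinatorial facts that make everything match are: (i) $\pi_i(I^{\sigma})=\pi_i(I)^{\sigma}$, since the $\mathfrak{S}_n$- and $\mathfrak{S}_d$-actions on $\cP_d(n)$ commute, so the "swap" case and the "diagonal" case correspond under $\sigma$, and $\Phi_{\sigma}(w_{\pi_i(I)})=w_{\pi_i(I)^{\sigma}}=w_{\pi_i(I^{\sigma})}$; (ii) in the diagonal case, $\pos_i(I^{\sigma})=\sigma(\pos_i(I))=\sigma(a)$ by (\ref{form-pos}), and the integer $k=\sharp\{j\in (I^{\sigma})_{\sigma(a)} : j\le i\}=\sharp\{j\in I_a : j\le i\}$ is unchanged since $(I^{\sigma})_{\sigma(a)}=I_a$. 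Therefore $g_i$ applies $T_k$ to the $\sigma(a)$-th tensor factor of $\Phi_{\sigma}(\xi)$, which is $v_a$ (as the $\sigma(a)$-th slot of $(v_{\sigma^{-1}(1)}\otimes\dots)$ holds $v_{\sigma^{-1}(\sigma(a))}=v_a$), yielding precisely $\Phi_{\sigma}\bigl((v_1\otimes\dots\otimes T_k(v_a)\otimes\dots)\otimes w_I\bigr)$.

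Since the verification works for all generators, $\Phi_{\sigma}$ is the desired $\mathbb{C}_{\theta} Y_{d,n}$-module isomorphism, proving (\ref{act-Sd2}). The only genuinely delicate point is bookkeeping the index shifts in step (ii)—keeping straight which tensor slot holds which $v_a$ after the permutation, and checking that the Hecke generator index $k$ is preserved; this is where I expect to spend the most care, but it is a matter of careful indexing rather than a real obstacle, all of it controlled by the identity $(I^{\sigma})_{\sigma(a)}=I_a$ and by (\ref{form-pos}).
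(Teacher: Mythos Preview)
Your proof is correct and follows exactly the same approach as the paper: verify the intertwining relation $\Phi_{\sigma}(x^{\sigma^{-1}}\cdot v)=x\cdot\Phi_{\sigma}(v)$ on the generators $E_J$ and $g_i$ using formulas (\ref{act-E})--(\ref{act-g}) and (\ref{action-gen}). The paper simply declares this check ``straightforward'' and omits it, whereas you have carefully written out the bookkeeping (in particular the identity $(I^{\sigma})_{\sigma(a)}=I_a$ ensuring that both the tensor slot and the Hecke index $k$ match), which is exactly the detail the paper leaves to the reader.
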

\begin{proof} We need to prove that, for $x\in \mathbb{C}_{\theta} Y_{d,n}$ and $v\in V$, we have
\begin{equation}\label{eq-iso}
\Phi_{\sigma}(x^{\sigma^{-1}}\!\!\cdot v)=x\cdot\Phi_{\sigma}(v)\ .
\end{equation}
 Recall that on the generators of $\mathbb{C}_{\theta} Y_{d,n}$, we have $E_I^{\sigma}=E_{I^{\sigma}}$ and $g_i^{\sigma}=g_i$. Then, from Formulas (\ref{act-E})-(\ref{act-g}) giving the action of the generators on $V$, it is straightforward to check that (\ref{eq-iso}) is satisfied. 
\end{proof}

\subsection{Clifford theory}\label{clifford}

Let $G$ be a subgroup of $\mathfrak{S}_d$. For each $V\in \operatorname{Irr} (\mathbb{C}_{\theta}Y_{d,n})$, we set 
$$\mathfrak{H}_{V} (G):=\{ g\in G \ |\ V^{g}\simeq V\}$$ 
to be the inertia subgroup of $V$ with respect to $G$. Note that for $\sigma\in\mathfrak{S}_d$, we have $\mathfrak{H}_{V} (G)=\sigma\mathfrak{H}_{V^{\sigma}} (G)\sigma^{-1}$, and so, in particular, we have $\mathfrak{H}_{V} (G)\cong\mathfrak{H}_{V^{\sigma}} (G)$.

The maps $\Phi_{\sigma}$, with $\sigma\in\mathfrak{H}_{V} (G)$, studied above provide a representation of $\mathfrak{H}_{V} (G)$ on $V$, and moreover, this action of $\mathfrak{H}_{V} (G)$ commutes with the action of $\mathbb{C}_{\theta} Y^G_{d,n}$; this follows from (\ref{eq-iso}).

The following proposition is a formulation in our situation of a more general result which can be found in \cite[Th. A.13]{RR}.
   \begin{Prop}\label{RR}
   For all $V\in \operatorname{Irr} (\mathbb{C}_{\theta}Y_{d,n})$, we have a decomposition as 
    a  $\mathbb{C}_{\theta} Y_{d,n}^G\otimes\C\mathfrak{H}_V(G)$-module:
   $$V=\bigoplus_{M\in \operatorname{Irr} (\mathfrak{H}_{V} (G))} V_M \otimes M\ ,$$
and the non zero $V_M$ give a complete set of  simple $\mathbb{C}_{\theta} Y_{d,n}^{G}$-modules. Moreover, for $(V,W)\in \operatorname{Irr} (\mathbb{C}_{\theta}Y_{d,n})^2 $, $M\in   \operatorname{Irr} (\mathfrak{H}_{V} (G))$ and $M'\in   \operatorname{Irr} (\mathfrak{H}_{W} (G))$, we have $V_M\simeq W_{M'}$ if and only if, first, there exists $g\in G$ such that $V\simeq W^g$  and second, identifying $\mathfrak{H}_{V} (G)$ with $\mathfrak{H}_{W} (G)$, we have $M \simeq M'$. 
   \end{Prop}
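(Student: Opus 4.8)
This is the Clifford-theoretic description of the simple modules of the fixed-point subalgebra $B:=\mathbb{C}_{\theta}Y_{d,n}^{G}$ of $A:=\mathbb{C}_{\theta}Y_{d,n}$, under a finite group $G\subseteq\mathfrak{S}_d$ acting by algebra automorphisms, in the favourable situation where $|G|$ is invertible (automatic over $\mathbb{C}$). The plan is to run the standard machinery of skew group algebras, which is exactly what \cite[Th.~A.13]{RR} encapsulates: form the skew group algebra $A\rtimes G$, use the symmetrizing idempotent to identify $B$ with $e(A\rtimes G)e$, classify $\operatorname{Irr}(A\rtimes G)$ by Clifford theory for crossed products, and then read off $\operatorname{Irr}(B)$ by idempotent condensation. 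The role of the operators $\Phi_{\sigma}$ of (\ref{act-Sd1})--(\ref{act-Sd2}) is precisely to supply the ``extension to the inertia group'' datum that this machinery requires.

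\textbf{Key steps.} First I would set $e:=\tfrac1{|G|}\sum_{\sigma\in G}\sigma\in A\rtimes G$ and check the elementary identities $\sigma e=e\sigma=e$ for $\sigma\in G$, $e^2=e$, and $eae=\bigl(\tfrac1{|G|}\sum_{\sigma\in G}a^{\sigma}\bigr)e$ for $a\in A$, which together give an algebra isomorphism $e(A\rtimes G)e\cong A^{G}=B$. Second, I would invoke the general theory of idempotents (``condensation''): $S\mapsto eS$ is an exact functor sending a simple $(A\rtimes G)$-module to a simple $B$-module or to $0$, and it induces a bijection between $\{S\in\operatorname{Irr}(A\rtimes G):eS\neq 0\}$ and $\operatorname{Irr}(B)$. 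Third, I would classify $\operatorname{Irr}(A\rtimes G)$: for $V\in\operatorname{Irr}(A)$ with inertia group $H:=\mathfrak{H}_{V}(G)$, the relations $\Phi_{\sigma}\Phi_{\sigma'}=\Phi_{\sigma\sigma'}$ and the intertwining property obtained by rearranging (\ref{eq-iso}), namely $\rho(x^{\sigma})=\Phi_{\sigma}\rho(x)\Phi_{\sigma}^{-1}$, turn $V$ into an $(A\rtimes H)$-module $\widetilde V$ restricting to $V$ over $A$; hence for each $M\in\operatorname{Irr}(H)$ the space $\widetilde V\otimes M$ (with $A$ acting through $V$ and $\sigma\in H$ acting by $\Phi_{\sigma}\otimes\rho_M(\sigma)$) is a simple $(A\rtimes H)$-module and $S_{(V,M)}:=\operatorname{Ind}_{A\rtimes H}^{A\rtimes G}(\widetilde V\otimes M)$ is simple over $A\rtimes G$; moreover every simple $(A\rtimes G)$-module is some $S_{(V,M)}$, with $S_{(V,M)}\cong S_{(W,M')}$ if and only if $V\cong W^{g}$ for some $g\in G$ and $M\cong M'$ after identifying $H$ with $\mathfrak{H}_{W}(G)$. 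Fourth, I would condense: restricting to $\mathbb{C}G\subseteq A\rtimes G$ one gets $S_{(V,M)}|_{\mathbb{C}G}\cong\operatorname{Ind}_{H}^{G}(V\otimes M)$, where $H$ acts on $V$ via the $\Phi_{\sigma}$; since $e$ is the symmetrizer, $eS_{(V,M)}=\bigl(S_{(V,M)}\bigr)^{G}\cong(V\otimes M)^{H}\cong\operatorname{Hom}_{H}(M^{*},V)$, which under the canonical decomposition $V=\bigoplus_{N\in\operatorname{Irr}(H)}V_{N}\otimes N$ (available simply because $\mathbb{C}H$ is semisimple and commutes with $B$) equals $V_{M^{*}}$. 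Combining this with the second and third steps yields at once that the non-zero $V_M$ form a complete irredundant list of simple $B$-modules and the stated isomorphism criterion, modulo the harmless relabelling $M\leftrightarrow M^{*}$.

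\textbf{Main obstacle.} Everything above apart from the third step is essentially formal. The genuine input is that $V$ extends, as an actual and not merely projective module, to the inertia subalgebra $A\rtimes H$: in general crossed-product Clifford theory one only obtains a projective extension governed by a class in $H^{2}(H,\mathbb{C}^{\times})$, and the simples over $A\rtimes G$ are then built from modules over a \emph{twisted} group algebra of $H$. Here that obstruction vanishes identically because $\sigma\mapsto\Phi_{\sigma}$ is already a genuine homomorphism ($\Phi_{\sigma}\Phi_{\sigma'}=\Phi_{\sigma\sigma'}$, with no projective factor), so ordinary representations of $H$ suffice and the clean statement holds; nailing this down is the point where one must use the explicit formula (\ref{act-Sd1}) rather than abstract nonsense. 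A secondary, purely bookkeeping point is keeping the dualization $M\leftrightarrow M^{*}$ (equivalently, the side on which one induces) consistent throughout, and tracking which $V_M$ happen to vanish — but since the statement only concerns the non-zero $V_M$, this causes no difficulty.
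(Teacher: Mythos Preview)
Your proposal is correct and is precisely the skew group algebra / idempotent condensation argument underlying \cite[Th.~A.13]{RR}. Note, however, that the paper does not give its own proof of this proposition: it simply states that this is ``a formulation in our situation of a more general result which can be found in \cite[Th.~A.13]{RR}'' and moves on. So you have supplied an argument where the paper only supplies a citation. Your identification of the crucial point---that the $2$-cocycle obstruction vanishes because the explicit operators $\Phi_{\sigma}$ satisfy $\Phi_{\sigma}\Phi_{\sigma'}=\Phi_{\sigma\sigma'}$ on the nose---is exactly right, and is the one place where the specific situation of the paper (via formula~(\ref{act-Sd1})) enters the otherwise general Clifford machinery.
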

   
Hence, to obtain a concrete description of the simple 
$\mathbb{C}_{\theta}Y^G_{d,n}$-modules, it is now necessary to understand which 
 modules $V_M$ are non zero. In our case, the situation is quite simple as we can notice in the following proposition.

\begin{Prop}\label{Cliff}
Keeping the above notations, for all $V\in \operatorname{Irr} (\mathbb{C}_{\theta}Y_{d,n})$ 
 and $M\in \operatorname{Irr} (\mathfrak{H}_{V} (G))$, we have $V_M\neq 0$. 
 In addition, we have:
$$\operatorname{dim}_{\mathbb{C}} (V_M)=
\frac{ \operatorname{dim}_{\mathbb{C}} (V).\operatorname{dim}_{\mathbb{C}} (M)}{ \sharp \mathfrak{H}_{V} (G) }\ .$$
\end{Prop}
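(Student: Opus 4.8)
The plan is to exploit the fact that $V$ is an irreducible module for $\mathbb{C}_{\theta}Y_{d,n}$ with a very explicit ``monomial'' structure coming from the matrix description (\ref{iso}), and that the action of $\mathfrak{H}_V(G)$ via the maps $\Phi_{\sigma}$ is correspondingly explicit by (\ref{act-Sd1}). First I would observe that, by the general Clifford-theoretic setup behind Proposition~\ref{RR}, the algebra generated inside $\operatorname{End}_{\mathbb{C}}(V)$ by $\mathbb{C}_{\theta}Y_{d,n}^G$ together with $\mathbb{C}\mathfrak{H}_V(G)$ is the full endomorphism algebra $\operatorname{End}_{\mathbb{C}}(V)$ (this is exactly what makes the double-centralizer decomposition $V=\bigoplus_M V_M\otimes M$ in Proposition~\ref{RR} a decomposition into \emph{all} of $\operatorname{End}_{\mathbb{C}}(V)$, with the two factors mutually centralizing). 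Granting that, a dimension count gives immediately
\[
\dim_{\mathbb{C}}(V)^2=\sum_{M\in\operatorname{Irr}(\mathfrak{H}_V(G))}\dim_{\mathbb{C}}(V_M)^2\,\dim_{\mathbb{C}}(M)^2\ .
\]
This alone does not force every $V_M$ to be non-zero, so the heart of the argument has to be a direct computation of $\dim_{\mathbb{C}}(V_M)$.

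For that, I would compute the character of $\mathfrak{H}_V(G)$ on the $\mathbb{C}_{\theta}Y_{d,n}^G\otimes\mathbb{C}\mathfrak{H}_V(G)$-module $V$, i.e.\ the trace of $\Phi_{\sigma}$ on $V$ for each $\sigma\in\mathfrak{H}_V(G)$, using the explicit formula (\ref{act-Sd1}). Here is the key point: for $\sigma\neq 1$ in $\mathfrak{H}_V(G)$, the map $(v_1\otimes\dots\otimes v_d)\otimes w_I\mapsto(v_{\sigma^{-1}(1)}\otimes\dots\otimes v_{\sigma^{-1}(d)})\otimes w_{I^{\sigma}}$ permutes the basis vectors $w_I$ ($I\in\cP(\mu)$) according to $I\mapsto I^{\sigma}$, and since all stabilizers of the $\mathfrak{S}_d$-action on $\cP_d(n)$ are trivial (noted in the proof of the Proposition in \S\ref{ect}), the permutation $I\mapsto I^{\sigma}$ has \emph{no} fixed point on $\cP(\mu)$ — hence $\operatorname{tr}_V(\Phi_{\sigma})=0$ for $\sigma\neq 1$. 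For $\sigma=1$ we simply get $\operatorname{tr}_V(\Phi_1)=\dim_{\mathbb{C}}(V)$. (One must also check that $\mu^{\sigma}=\mu$ for $\sigma\in\mathfrak{H}_V(G)$, so that $\Phi_{\sigma}$ is genuinely an endomorphism of $V$; this holds because $V^{\sigma}\simeq V$ forces the underlying composition to be fixed.) Therefore, as a character of $\mathfrak{H}_V(G)$, the module $V$ affords $\tfrac{\dim_{\mathbb{C}}(V)}{\sharp\mathfrak{H}_V(G)}$ times the regular character, so $V\cong\bigoplus_{M}M^{\oplus \dim_{\mathbb{C}}(V)/\sharp\mathfrak{H}_V(G)}$ as an $\mathfrak{H}_V(G)$-module. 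Comparing with $V=\bigoplus_M V_M\otimes M$ as $\mathfrak{H}_V(G)$-modules gives $\dim_{\mathbb{C}}(V_M)=\dim_{\mathbb{C}}(V)/\sharp\mathfrak{H}_V(G)$ for \emph{every} $M\in\operatorname{Irr}(\mathfrak{H}_V(G))$; in particular $V_M\neq 0$, and the dimension formula follows after multiplying by $\dim_{\mathbb{C}}(M)$ is — wait, more precisely, the multiplicity of $M$ in $V|_{\mathfrak{H}_V(G)}$ is $\dim_{\mathbb{C}}(V_M)$, which we just computed, and then the claimed formula $\dim_{\mathbb{C}}(V_M)=\dim_{\mathbb{C}}(V)\dim_{\mathbb{C}}(M)/\sharp\mathfrak{H}_V(G)$ is recovered once one notes that here the ``multiplicity'' in the $\mathbb{C}_{\theta}Y_{d,n}^G\otimes\mathbb{C}\mathfrak{H}_V(G)$-decomposition is $\dim_{\mathbb{C}}(V_M)$ while the vector-space dimension counted with $M$ is $\dim_{\mathbb{C}}(V_M)\dim_{\mathbb{C}}(M)$; summing $\dim_{\mathbb{C}}(V_M)\dim_{\mathbb{C}}(M)$ over the $M$ in a fixed isotypic pattern recovers $\dim_{\mathbb{C}}(V)$, consistent with the regular-character computation. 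I will present this cleanly by just saying: $V|_{\mathfrak{H}_V(G)}$ is a multiple of the regular representation, with multiplicity $\dim_{\mathbb{C}}(V)/\sharp\mathfrak{H}_V(G)$, whence $\dim_{\mathbb{C}}\operatorname{Hom}_{\mathfrak{H}_V(G)}(M,V)=\dim_{\mathbb{C}}(V)/\sharp\mathfrak{H}_V(G)$, and this Hom-space is precisely $V_M$ by Proposition~\ref{RR}.

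The main obstacle, and the step I would be most careful about, is the vanishing $\operatorname{tr}_V(\Phi_{\sigma})=0$ for $\sigma\neq 1$: it rests entirely on the triviality of the stabilizers of the $\mathfrak{S}_d$-action on $\cP_d(n)$ (equivalently on $\cP(\mu)$), so I would state that fixed-point-free claim as an explicit small lemma and double-check that the tensor-factor permutation $v_a\mapsto v_{\sigma^{-1}(a)}$ does not contribute any extra fixed vectors — it cannot, because the basis being permuted is $\{(v_1\otimes\dots\otimes v_d)\otimes w_I\}$ indexed (for fixed basis choices of the $V_a$) by tuples times $\cP(\mu)$, and $I\mapsto I^{\sigma}$ already has no fixed index. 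A secondary point to nail down is that $\Phi_{\sigma}$ for $\sigma\in\mathfrak{H}_V(G)$ must be taken as an honest operator on $V$ (not merely an isomorphism $V^{\sigma^{-1}}\xrightarrow{\sim}\text{(another module)}$ as in the Lemma of \S\ref{subsec-rep}); this requires fixing, once and for all, an isomorphism $V^{\sigma}\simeq V$ for each $\sigma\in\mathfrak{H}_V(G)$ and checking these can be chosen so that $\sigma\mapsto\Phi_{\sigma}$ is a genuine representation of $\mathfrak{H}_V(G)$ (not merely projective), which is already implicit in Proposition~\ref{RR} and the relation $\Phi_{\sigma}\circ\Phi_{\sigma'}=\Phi_{\sigma\sigma'}$ noted before the Lemma — so in fact no cocycle issue arises and the trace computation is unambiguous.
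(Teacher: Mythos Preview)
Your proposal is correct and follows essentially the same approach as the paper. Both arguments hinge on the single observation that the $\mathfrak{S}_d$-action on $\cP_d(n)$ (hence on the index set $\cP(\mu)$ of the basis $\{w_I\}$) has trivial stabilizers, and from this conclude that $V$ restricted to $\mathfrak{H}_V(G)$ is a direct sum of $\dim_{\mathbb{C}}(V)/\sharp\mathfrak{H}_V(G)$ copies of the regular representation; the paper reaches this by noting that each basis vector $(v_1\otimes\dots\otimes v_d)\otimes w_I$ has trivial stabilizer so its $\mathfrak{H}_V(G)$-orbit spans a copy of the regular module, while you reach it by the equivalent character computation $\operatorname{tr}_V(\Phi_\sigma)=0$ for $\sigma\neq 1$. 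Your extra preamble about the double-centralizer dimension count is unnecessary (you yourself note it does not suffice), and your discussion of the potential cocycle obstruction is well-placed but already handled in the paper by the explicit relation $\Phi_\sigma\circ\Phi_{\sigma'}=\Phi_{\sigma\sigma'}$.
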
    
   \begin{proof}
 Let $V=(V_1\otimes\dots V_d)\otimes W_{\mu}\in \operatorname{Irr} (\mathbb{C}_{\theta}Y_{d,n})$ as in (\ref{simple}). Let $v=(v_1\otimes\dots\otimes v_d)\otimes w_I\in V$. We recall that the action $I\mapsto I^{\sigma}$ of $\mathfrak{S}_d$ is is semiregular (that is, all the stabilizers are trivial). Therefore, it follows that such a vector $v$ has a trivial stabilizer under the action of $\mathfrak{H}_{V} (G)$ described in (\ref{act-Sd1}). Denoting $X_v$ the subspace of $V$ generated by al the $\{\Phi_{\sigma}(v)\}_{\sigma\in\mathfrak{H}_{V} (G)}$, we obtain that $X_v$ is isomorphic as a $\mathfrak{H}_{V} (G)$-module to the regular representation.

As a consequence of the preceding discussion, we have that $V$ is isomorphic, as a  $\mathfrak{H}_{V} (G)$-module,  to a direct sum of finitely many copies of the regular representation. The number of copies is obviously $\displaystyle\frac{\operatorname{dim}_{\mathbb{C}} (V)}{\sharp \mathfrak{H}_{V} (G)}$\ . This shows that all the modules $V_M$ are non zero and moreover that the multiplicity of 
 $M$ in the decomposition of $V$ as a $\mathfrak{H}_{V} (G)$-module is  
 $$\frac{\operatorname{dim}_{\mathbb{C}} (V).\textrm{dim}_{\mathbb{C}} (M)}{\sharp \mathfrak{H}_{V} (G)}\ .$$
This proves the formula for $\operatorname{dim}_{\mathbb{C}} (V_M)$ and concludes the proof.
   \end{proof}

   \section{Action of the symmetric group}
    
In this section, we consider the action of the full  symmetric group $\mathfrak{S}_d$ on 
  $Y_{d,n}$. We  consider  the algebra of fixed points under this action,
   study its structure and its relation with the algebra of braid and ties.

\subsection{Subalgebra $Y_{d,n}^{\mathfrak{S}_d}$}

Let $\cP_d(n)/\mathfrak{S}_d$ denote the set of orbits of $\cP_d(n)$ under the action of $\mathfrak{S}_d$. By definition of $\cP_d(n)$ and of the action of $\mathfrak{S}_d$ on it, the orbits are in bijection with the (unordered) partitions of $\{1,\dots,n\}$ into $d$ parts:
\[\cP_d(n)/\mathfrak{S}_d=\bigl\{\ \{I_1,\dots,I_d\}\ \text{such that}\ I_1\cup\dots\cup I_d=\{1,\dots,n\}\ \bigr\}\ .\]
Note again that some parts among $I_1,\dots,I_d$ can be empty. Denote 
$$B_d(n):=\vert\cP_d(n)/\mathfrak{S}_d\vert\ ,$$
the cardinal of $\cP_d(n)/\mathfrak{S}_d$, which is the number of partitions of $\{1,\dots,n\}$ into at most $d$ parts. 

Let $[I]\in \cP_d(n)/\mathfrak{S}_d$ be the orbit of some element $I\in\cP_d(n)$. We set
\[E_{[I]}:=\sum_{\sigma\in\mathfrak{S}_d} E_{I^{\sigma}}\ .\]
From Formula (\ref{action}) giving the action of $\mathfrak{S}_d$ on $Y_{d,n}$, it follows at once that
\begin{equation}\label{basis-inv}
\{\ E_{[I]}\,g_w\ |\ [I]\in\cP_d(n)/\mathfrak{S}_d\,,\ w\in\mathfrak{S}_n\ \}
\end{equation}
is a $\C[q,q^{-1}]$-basis of the subalgebra $Y_{d,n}^{\mathfrak{S}_d}$. In particular, $Y_{d,n}^{\mathfrak{S}_d}$ is free of rank:
\begin{equation}\label{dim-inv}
\text{rk}(Y_{d,n}^{\mathfrak{S}_d})=B_d(n)\, n!\ .
\end{equation}

\paragraph{Elements $e_{[I]}$.} We introduce some notations. First, for any $i,j\in\{1,\dots,n\}$, let
\begin{equation}\label{def-eij}
e_{i,j}:=\frac{1}{d}\sum_{1\leq s\leq d}t_i^s t_{j}^{-s}\ ,
\end{equation}
so that we have in particular $e_i=e_{i,i+1}$ for $i=1,\dots,n-1$. Note also that $e_{i,i}=1$ by definition, and that these elements $e_{i,j}$ commute with each other.
Then, for $I\in\cP_d(n)$, we set
\begin{equation}\label{def-eI}
e_{[I]}:=\prod_{\text{\scriptsize{$\begin{array}{c}i,j=1,\dots,n \\
\pos_i(I)=\pos_{j}(I)
\end{array}$}}}\hspace{-0.4cm}e_{i,j}\hspace{0.4cm}\cdot\hspace{-0.4cm}\prod_{\text{\scriptsize{$\begin{array}{c}i,j=1,\dots,n \\
\pos_i(I)\neq\pos_{j}(I)
\end{array}$}}}\hspace{-0.4cm}(1-e_{i,j})\ .
\end{equation}
The element $e_{[I]}$ depends only on the class $[I]$ since it depends only on the subsets $I_1,\dots,I_d$ forming the ordered partition $I$ and not on their ordering.

\begin{lemma}\label{lem-E-e}
Let $I\in\cP_d(n)$. We have:
\[e_{[I]}=E_{[I]}\ .\]
\end{lemma}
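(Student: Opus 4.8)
The plan is to compute both sides in the basis of primitive idempotents $E_\chi$ of $\mathcal{T}_{d,n}$, using the fact (from Formula~(\ref{form-e2}) and its $e_{i,j}$-analogue) that each $e_{i,j}$ is a sum of those $E_\chi$ for which $\chi(t_i)=\chi(t_j)$. First I would record the elementary fact that, for a character $\chi$ corresponding to the ordered partition $I(\chi)$, we have $\chi(e_{i,j})=1$ if $\pos_i(I(\chi))=\pos_j(I(\chi))$ and $\chi(e_{i,j})=0$ otherwise; consequently $\chi(1-e_{i,j})$ is $1$ exactly in the complementary case. Since all the $e_{i,j}$ (and hence the $1-e_{i,j}$) are mutually commuting idempotents lying in $\mathcal{T}_{d,n}$, and the $E_\chi$ form a complete orthogonal system, evaluating the product defining $e_{[I]}$ on a character $\chi$ gives $\prod \chi(e_{i,j})\cdot\prod\chi(1-e_{i,j})$, which is $1$ if and only if \emph{both} conditions ``$\pos_i(I)=\pos_j(I)\Rightarrow \pos_i(I(\chi))=\pos_j(I(\chi))$'' and ``$\pos_i(I)\neq\pos_j(I)\Rightarrow\pos_i(I(\chi))\neq\pos_j(I(\chi))$'' hold for all $i,j$, and $0$ otherwise.

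The key combinatorial step is then to observe that these two conditions together say precisely that the set partition underlying $I(\chi)$ equals the set partition underlying $I$ — i.e. $i$ and $j$ lie in the same block of $I(\chi)$ if and only if they lie in the same block of $I$. That is exactly the statement that $I(\chi)$ lies in the orbit $[I]$ under $\mathfrak{S}_d$. Hence
\[
e_{[I]}=\sum_{\chi\ :\ I(\chi)\in[I]}E_\chi=\sum_{\sigma\in\mathfrak{S}_d}E_{I^\sigma}=E_{[I]}\ ,
\]
where the middle equality uses that the stabilizers of the $\mathfrak{S}_d$-action on $\cP_d(n)$ are trivial (noted in the proof of the Proposition in Section~2.3), so the orbit of $I$ is exactly $\{I^\sigma\mid\sigma\in\mathfrak{S}_d\}$ with no repetitions, matching the definition $E_{[I]}=\sum_{\sigma\in\mathfrak{S}_d}E_{I^\sigma}$.

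The main obstacle — really the only point requiring care — is the combinatorial translation in the second paragraph: one must check that the conjunction of the ``$\Rightarrow$'' condition coming from the $e_{i,j}$ factors and the one coming from the $(1-e_{i,j})$ factors is genuinely equivalent to ``same set partition'', i.e. an ``if and only if'' on block membership rather than just one inclusion. This is where both halves of the product in~(\ref{def-eI}) are used: the $e_{i,j}$ part forces each block of $I$ to be contained in a block of $I(\chi)$, and the $(1-e_{i,j})$ part forces distinct blocks of $I$ to land in distinct blocks of $I(\chi)$; together with $|I|=|I(\chi)|=d$ parts (allowing empties) this pins down the set partition exactly. I would also remark, as already done in the paper, that $e_{[I]}$ is manifestly independent of the ordering of $I_1,\dots,I_d$, so it is legitimate to index it by the class $[I]$, consistently with $E_{[I]}$.
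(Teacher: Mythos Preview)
Your proof is correct and follows essentially the same route as the paper: both arguments evaluate the two elements on all characters $\chi$ of $\mathcal{T}_{d,n}$, use that $\chi(e_{i,j})\in\{0,1\}$ detects whether $\chi(t_i)=\chi(t_j)$, and identify the resulting condition $\chi(e_{[I]})=1$ with the statement that $I(\chi)$ and $I$ have the same underlying set partition, i.e.\ $I(\chi)\in[I]$. Your version simply spells out the combinatorial translation and the passage through the basis $\{E_\chi\}$ a bit more explicitly than the paper does.
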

\begin{proof} By definition of $E_{[I]}$, it is the unique element of $\mathcal{T}_{d,n}$ satisfying, for all complex characters $\chi$ of $\mathcal{T}_{d,n}$,
\begin{equation}\label{eq-lem1}
\chi(E_{[I]})=1\ \ \ \ \text{if $I(\chi)\in[I]$},\ \ \ \ \quad\text{and}\ \ \ \ \quad \chi(E_{[I]})=0\ \ \ \text{otherwise\,,}
\end{equation}
where $I(\chi)$ is the ordered partition in $\cP_d(n)$ corresponding to $\chi$. The condition $I(\chi)\in[I]$ means that $I(\chi)=I^{\sigma}$ for some $\sigma\in\mathfrak{S}_d$. From the definition of $I(\chi)$, this is equivalent to the property that,
$$\text{for $i,j=1,\dots,n\,$:}\ \ \qquad \pos_i(I)=\pos_j(I)\ \ \ \ \Longleftrightarrow\ \ \ \ \chi(t_i)=\chi(t_j)\ .$$
On the other hand, we have, for any $i,j\in\{1,\dots,n\}$ and any character $\chi$, that $\chi(e_{i,j})=1$ if $\chi(t_i)=\chi(t_j)$ and $\chi(e_{i,j})=0$ otherwise. Therefore, the element $e_{[I]}$ satisfies the same formulas (\ref{eq-lem1}) as the element $E_{[I]}$ and thus coincides with it.
\end{proof}

\begin{Prop}\label{prop-inv}
The subalgebra $Y_{d,n}^{\mathfrak{S}_d}$ of $Y_{d,n}$ coincides with the subalgebra generated by $g_1,\dots,g_{n-1}$ and $e_1,\dots,e_{n-1}$.
\end{Prop}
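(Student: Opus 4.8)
The plan is to prove the two inclusions separately. Let $A$ denote the subalgebra of $Y_{d,n}$ generated by $g_1,\dots,g_{n-1}$ and $e_1,\dots,e_{n-1}$. For the inclusion $A\subseteq Y_{d,n}^{\mathfrak{S}_d}$, it suffices to observe that each generator of $A$ is fixed by the action of $\mathfrak{S}_d$: we have $g_i^{\sigma}=g_i$ by definition (\ref{action-gen}), and $e_i^{\sigma}=e_i$ was already checked in the proof that (\ref{action}) defines an action by algebra automorphisms (it follows from (\ref{form-e2}) together with the fact that $\pos_i(I^{\sigma})=\pos_{i+1}(I^{\sigma})$ iff $\pos_i(I)=\pos_{i+1}(I)$). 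Since $Y_{d,n}^{\mathfrak{S}_d}$ is a subalgebra, it contains all of $A$.

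For the reverse inclusion $Y_{d,n}^{\mathfrak{S}_d}\subseteq A$, I would use the explicit basis (\ref{basis-inv}) of $Y_{d,n}^{\mathfrak{S}_d}$, namely $\{E_{[I]}g_w\ |\ [I]\in\cP_d(n)/\mathfrak{S}_d,\ w\in\mathfrak{S}_n\}$. Since each $g_w$ is a product of the $g_i$'s, it lies in $A$; so it is enough to show that every $E_{[I]}$ lies in $A$. By Lemma \ref{lem-E-e}, $E_{[I]}=e_{[I]}$, and $e_{[I]}$ is by (\ref{def-eI}) a product of factors $e_{i,j}$ and $(1-e_{i,j})$. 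Hence the whole problem reduces to showing that each $e_{i,j}$ belongs to $A$ (then $1-e_{i,j}\in A$ trivially, and products of elements of $A$ stay in $A$).

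The main obstacle — and the technical heart of the argument — is therefore to express $e_{i,j}$ in terms of the $g_k$'s and $e_k$'s for arbitrary $i<j$, not just for $j=i+1$. The idea is a conjugation argument: if $w\in\mathfrak{S}_n$ is a permutation sending the pair $\{i,i+1\}$ to the pair $\{i',j'\}$, then since $g_w t_k = t_{w(k)} g_w$ (iterating the relation $g_it_j=t_{\pi_i(j)}g_i$), one gets $g_w\, e_{i,i+1}\, g_w^{-1} = e_{w(i),w(i+1)} = e_{i',j'}$. Taking $w$ a suitable permutation (for instance a product of adjacent transpositions moving $i+1$ up to position $j$ while fixing $i$), we obtain every $e_{i',j'}$ as $g_w e_{i'} g_w^{-1}$ for an appropriate adjacent $e_{i'}$, and since $g_w, g_w^{-1}\in A$ and $e_{i'}\in A$, this shows $e_{i,j}\in A$. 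One should double-check that $g_w^{-1}$ lies in $A$: this holds because $g_i^{-1}=g_i-(q-q^{-1})e_i$ is a combination of generators of $A$. Putting these pieces together gives $e_{[I]}=E_{[I]}\in A$ for all $[I]$, hence the basis (\ref{basis-inv}) lies in $A$, so $Y_{d,n}^{\mathfrak{S}_d}\subseteq A$, completing the proof.
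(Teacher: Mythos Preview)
Your proof is correct and follows essentially the same route as the paper's: both show the easy inclusion via $g_i^{\sigma}=g_i$ and $e_i^{\sigma}=e_i$, and then use Lemma~\ref{lem-E-e} together with the basis (\ref{basis-inv}) for the reverse inclusion. The only difference is that you spell out explicitly why each $e_{i,j}$ lies in $A$ (via the conjugation $g_w e_i g_w^{-1}=e_{w(i),w(i+1)}$ and $g_i^{-1}=g_i-(q-q^{-1})e_i\in A$), a step the paper leaves implicit.
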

\begin{proof}
From Formulas (\ref{action-gen}) giving the action of $\mathfrak{S}_d$ on $Y_{d,n}$, we have immediately that $g_1,\dots,g_{n-1}\in Y_{d,n}^{\mathfrak{S}_d}$.
The fact that $e_1,\dots,e_{n-1}\in Y_{d,n}^{\mathfrak{S}_d}$ follows directly from Formula (\ref{form-e2}) with the remark that the property $\pos_i(I)=\pos_{i+1}(I)$ is unaltered by the action of $\mathfrak{S}_d$ on $I$.

Reciprocally, Lemma \ref{lem-E-e} shows that any element of the basis (\ref{basis-inv}) of $Y_{d,n}^{\mathfrak{S}_d}$ is in the subalgebra generated by $g_1,\dots,g_{n-1}$ and $e_1,\dots,e_{n-1}$.
\end{proof}

\begin{Rem}\label{image-braid}$\ $
\begin{itemize}
\item[\textbf{(i)}] Due to Proposition \ref{prop-inv}, the algebra $Y_{d,n}^{\mathfrak{S}_d}$ of fixed points under the action of the symmetric group $\mathfrak{S}_d$ has the following interpretation and motivation in connections with knot theory. The generators $g_i$'s are invertible and satisfy the braid relations, thus providing a morphism from the algebra of the braid group on $n$ strands into $Y_{d,n}$. By definition, the image of this morphism is the subalgebra generated by elements $g_i^{\pm1}$'s. 

Let extend the ring $\C[q,q^{-1}]$ by localizing it with respect to $q-q^{-1}$. Over this extended ring, it follows from Proposition \ref{prop-inv} and the formula $g_i^{-1}=g_i-(q-q^{-1})e_i$ that $Y_{d,n}^{\mathfrak{S}_d}$ coincides with the subalgebra generated by the $g_i^{\pm1}$'s, and therefore coincides with the image of the algebra of the braid group inside $Y_{d,n}$.

\item[\textbf{(ii)}] Due to the characteristic equation (\ref{char-g}), the image of the algebra of the braid group (over $\C[q,q^{-1}]$) inside $Y_{d,n}$ can also be described as the subalgebra generated only by the generators $g_i$'s.
\end{itemize}
\end{Rem}

\subsection{The algebra of braids and ties}

The algebra $BT_n$ is the associative $\C[q,q^{-1}]$-algebra with generators
$$\tg_1,\ldots,\tg_{n-1},\te_1,\ldots,\te_{n-1},$$
subject to the defining relations:
\begin{equation}\label{def-3a}
\left\{\begin{array}{rclcccl}
\tg_i\tg_j & = & \tg_j\tg_i &&&\qquad & \text{for $i,j\in\{1,\ldots,n-1\}$ such that $\vert i-j\vert > 1$\,,}\\[0.2em]
\tg_i\tg_{i+1}\tg_i & = & \tg_{i+1}\tg_i\tg_{i+1} &&&& \text{for $i\in\{1,\ldots,n-2\}$\,,}\\[0.2em]
\tg_i^2  & = & 1 + (q-q^{-1}) \, \te_{i} \, \tg_i &&&\qquad& \text{for $i\in\{1,\ldots,n-1\}$\,,}
\end{array}
\right.
\end{equation}
\begin{equation}\label{def-3b}
\left\{\hspace{0.0cm}\begin{array}{rclcccl}
\te_i^2 & = & \te_i &&&& \text{for $i\in\{1,\dots,n-1\}$\,,}\\[0.2em]
\te_i\te_j & =  & \te_j\te_i &&&\qquad\ \ \,&  \text{for $i,j\in\{1,\ldots,n-1\}$\,,}\\[0.2em]
\tg_i\te_j & = & \te_j\tg_i &&&& \text{for $i,j\in\{1,\ldots,n-1\}$ such that $|i-j|\neq 1$\,,}\\[0.2em]
\te_i\tg_j\tg_i   & =  &  \tg_j\tg_i\te_j & & && \text{for $i,j\in\{1,\ldots,n-1\}$ such that $|i-j|=1$\,,}\\[0.2em]
\te_i\te_j\tg_j   & =  &  \te_i\tg_j\te_i & = & \tg_j\te_i\te_j & & \text{for $i,j\in\{1,\ldots,n-1\}$ such that $|i-j|=1$\,.}
\end{array}
\right.\end{equation}

It is a straightforward verification to check that all these defining relations are satisfied in $Y_{d,n}$, for any $d>0$, if we replace elements $\tg_i$ by $g_i$ and elements $\te_i$ by $e_i$. In other words, there is a morphism of algebras from $BT_n$ to $Y_{d,n}$ given by
\begin{equation}\label{morp-BT}
\phi_{d,n}\ \ :\ \ \tg_i\mapsto g_i\ \ \ \ \text{and}\ \ \ \ \te_i\mapsto e_i\,,\ \ \ \ \ \ \ \text{for $i=1,\dots,n-1$.}
\end{equation}

For $i=1,\dots,n-1$, let $\te_{i,i}:=1$. For $i,j\in\{1,\dots,n\}$, if $i<j$ then let
\[\te_{i,j}:=\tg_{j-1}\dots \tg_{i+1}\cdot\te_i\cdot\tg_{i+1}^{-1}\dots\tg_{j-1}^{-1}\ ,\]
so that we have in particular $\te_{i,i+1}=\te_i$ for $i=1,\dots,n-1$. Finally if $i>j$ let $\te_{i,j}:=\te_{j,i}$.

\begin{Rem}
For any $i,j=1,\dots,n-1$, we have $\phi_{d,n}(\te_{i,j})=e_{i,j}$, with $e_{i,j}$ defined in (\ref{def-eij}).
\end{Rem}

Consider an arbitrary (unordered) partition of the set $\{1,\dots,n\}$. Of course it has at most $n$ non-empty parts so we can see it as a class $[I]\in\cP_n(n)/\mathfrak{S}_n$ for a certain element $I$ in $\cP_n(n)$. We set
\begin{equation}\label{def-tfI}
\tilde{f}_{[I]}:=\prod_{\text{\scriptsize{$\begin{array}{c}i,j=1,\dots,n \\
\pos_i(I)=\pos_{j}(I)
\end{array}$}}}\hspace{-0.4cm}\te_{i,j}\ ,
\end{equation}
which is well-defined for similar reasons as elements $e_{[I]}$ in (\ref{def-eI}).

In \cite{Ry}, it is proved by direct calculations that the following set is a spanning set of $BT_n$:
\begin{equation}\label{basis-BT}
\{\ \tilde{f}_{[I]}\tg_w\ |\ [I]\in\cP_n(n)/\mathfrak{S}_n\,,\ w\in\mathfrak{S}_n\ \}\ ,
\end{equation}
where $\tg_w$, for $w\in\mathfrak{S}_n$, is defined as in $Y_{d,n}$ since the braid relations are satisfied in $BT_n$.

Using the results on $Y_{d,n}^{\mathfrak{S}_d}$ from the previous subsection, we obtain the following corollary on the structure of $BT_n$.
\begin{Cor}\label{coro-BT}
\begin{itemize}
\item[\textbf{(i)}] The set (\ref{basis-BT}) is a $\C[q,q^{-1}]$-basis of $BT_n$, which is therefore free of rank $B_n(n)\,n!$\ .
\item[\textbf{(ii)}] The algebra $BT_n$ is isomorphic to the subalgebra of $Y_{d,n}$ generated by $g_1,\dots,g_{n-1},e_1,\dots,e_{n-1}$ if and only if $d\geq n$.
\end{itemize}
\end{Cor}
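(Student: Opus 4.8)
The plan is to deduce both parts from the morphism $\phi_{d,n}\colon BT_n\to Y_{d,n}$ of (\ref{morp-BT}), whose image, by Proposition \ref{prop-inv}, is exactly $Y_{d,n}^{\mathfrak{S}_d}$. For part \textbf{(i)}, I first note that (\ref{basis-BT}) is already known to span $BT_n$ by \cite{Ry}, so it suffices to prove linear independence. For this I count: the spanning set (\ref{basis-BT}) has at most $B_n(n)\,n!$ elements, while the target algebra $Y_{n,n}^{\mathfrak{S}_n}$ is free of rank $B_n(n)\,n!$ by (\ref{dim-inv}). So if I can show that $\phi_{n,n}$ sends the spanning set (\ref{basis-BT}) onto a basis of $Y_{n,n}^{\mathfrak{S}_n}$, then (\ref{basis-BT}) must be linearly independent in $BT_n$ (a spanning set of size $\le N$ mapping onto $N$ linearly independent elements is itself a basis). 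The key computation here is that $\phi_{n,n}(\tilde f_{[I]}\tg_w)=e_{[I]}g_w$ by the Remark preceding the corollary (since $\phi_{n,n}(\te_{i,j})=e_{i,j}$, and $\tilde f_{[I]}$ is built from the $\te_{i,j}$ exactly as $e_{[I]}$ is built from the $e_{i,j}$ — noting that the second product in (\ref{def-eI}) involving the factors $(1-e_{i,j})$ is automatically absorbed, or rather one must check that $\tilde f_{[I]}$ matches $e_{[I]}$ under $\phi_{n,n}$, which holds because $e_{[I]}=E_{[I]}$ by Lemma \ref{lem-E-e} and the $E_{[I]}$ for $d=n$ are precisely the primitive central-in-$\mathcal{T}$ idempotents attached to orbits). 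Then $e_{[I]}g_w=E_{[I]}g_w$ runs exactly over the basis (\ref{basis-inv}) of $Y_{n,n}^{\mathfrak{S}_n}$ as $[I]$ ranges over $\cP_n(n)/\mathfrak{S}_n$ and $w$ over $\mathfrak{S}_n$. This simultaneously proves freeness of rank $B_n(n)\,n!$ and that $\phi_{n,n}$ is an isomorphism onto $Y_{n,n}^{\mathfrak{S}_n}$.

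For part \textbf{(ii)}, the "if" direction follows from a more careful version of the above: for $d\ge n$ I claim $\phi_{d,n}$ is still injective. Indeed, the spanning set (\ref{basis-BT}) of $BT_n$ has size $B_n(n)\,n!$, and I must exhibit $B_n(n)\,n!$ linearly independent elements in its image. The image of $\tilde f_{[I]}\tg_w$ is $e_{[I]}g_w$ where now $[I]\in\cP_n(n)/\mathfrak{S}_n$ is viewed inside $Y_{d,n}$; when $d\ge n$, every partition of $\{1,\dots,n\}$ into at most $n$ parts can be realized as a class $[I]$ with $I\in\cP_d(n)$ (there are enough "colors"), and the elements $e_{[I]}=E_{[I]}$ for these classes remain orthogonal idempotents, so $\{E_{[I]}g_w\}$ is part of the basis (\ref{basis-E})-type basis of $Y_{d,n}$ and in particular linearly independent. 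Hence $\phi_{d,n}$ is injective, and by Proposition \ref{prop-inv} its image is the subalgebra generated by $g_1,\dots,g_{n-1},e_1,\dots,e_{n-1}$; thus $\phi_{d,n}$ is the asserted isomorphism. For the "only if" direction, I argue by a rank/dimension obstruction: when $d<n$, the subalgebra $Y_{d,n}^{\mathfrak{S}_d}$ of $Y_{d,n}$ has basis (\ref{basis-inv}) indexed by $\cP_d(n)/\mathfrak{S}_d$, so its rank is $B_d(n)\,n!$, where $B_d(n)$ is the number of partitions of $\{1,\dots,n\}$ into \emph{at most $d$} parts. Since $d<n$, we have $B_d(n)<B_n(n)$ strictly (there exists a partition of $\{1,\dots,n\}$ into exactly $d+1\le n$ nonempty singletons-plus-blocks that is not counted), hence $\operatorname{rk}(Y_{d,n}^{\mathfrak{S}_d})=B_d(n)\,n!<B_n(n)\,n!=\operatorname{rk}(BT_n)$ by part \textbf{(i)}. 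A surjection $BT_n\twoheadrightarrow Y_{d,n}^{\mathfrak{S}_d}$ of free modules of strictly smaller rank cannot be an isomorphism, so $\phi_{d,n}$ is not an isomorphism onto its image, proving the contrapositive.

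The main obstacle I anticipate is the bookkeeping in identifying $\phi_{d,n}(\tilde f_{[I]})$ with $e_{[I]}$: one must check that the relations (\ref{def-3b}) force $\tilde f_{[I]}$ to be well-defined independently of the chosen reduced data and of the ordering of blocks, and then that $\phi_{d,n}$ matches it termwise with the product (\ref{def-eI}) — the subtlety being the role of the complementary factors $(1-e_{i,j})$, which do not appear literally in $\tilde f_{[I]}$ but are forced by the idempotent/orthogonality relations once one works modulo lower terms. This is exactly the content that makes the identification of $Y_{n,n}^{\mathfrak{S}_n}$ with $BT_n$ nontrivial rather than formal, and it is where the hypothesis $d\ge n$ (enough colors to separate all $n$ points) is genuinely used.
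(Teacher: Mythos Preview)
Your overall strategy---use the surjection $\phi_{d,n}$ onto $Y_{d,n}^{\mathfrak{S}_d}$ and compare ranks---is exactly the paper's approach. However, there is a genuine error in your execution of part \textbf{(i)} (and of the ``if'' direction of \textbf{(ii)}): the claim that $\phi_{d,n}(\tilde{f}_{[I]})=e_{[I]}$ is \emph{false}. By definition (\ref{def-tfI}), $\tilde{f}_{[I]}$ is the product of the $\te_{i,j}$ only over pairs $i,j$ lying in the \emph{same} block of $[I]$, so its image under $\phi_{d,n}$ is $\prod_{\pos_i(I)=\pos_j(I)}e_{i,j}$; whereas $e_{[I]}$ in (\ref{def-eI}) carries the additional factor $\prod_{\pos_i(I)\neq\pos_j(I)}(1-e_{i,j})$. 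For instance, when $[I]$ is the partition of $\{1,\dots,n\}$ into singletons, $\tilde{f}_{[I]}=1$ while $e_{[I]}=\prod_{i<j}(1-e_{i,j})\neq 1$. Your parenthetical hedges (``automatically absorbed'', ``modulo lower terms'') do not repair this; at best they gesture towards a unitriangular change of basis that you would still have to establish.

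The paper sidesteps this difficulty by a cleaner counting argument that requires no elementwise identification. Since $\phi_{d,n}$ is surjective onto $Y_{d,n}^{\mathfrak{S}_d}$ (Proposition \ref{prop-inv}), the image of the spanning set (\ref{basis-BT}) is automatically a spanning set of $Y_{d,n}^{\mathfrak{S}_d}$. For $d\ge n$ one has $B_d(n)=B_n(n)$, so this image has cardinality at most $B_n(n)\,n!=\operatorname{rk}(Y_{d,n}^{\mathfrak{S}_d})$; a spanning set of a free module whose size does not exceed the rank must be a basis. Hence (\ref{basis-BT}) is linearly independent in $BT_n$, proving \textbf{(i)}, and $\phi_{d,n}$ is injective, giving the ``if'' direction of \textbf{(ii)}.

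For the ``only if'' direction of \textbf{(ii)}, your rank inequality $B_d(n)<B_n(n)$ for $d<n$ is correct and is precisely what the paper uses (the paper cites the partition $\{1\}\cup\dots\cup\{n\}$ as witness). But the conclusion you should draw is that the two free $\C[q,q^{-1}]$-modules have different ranks and therefore cannot be isomorphic as algebras by \emph{any} map. Saying only that ``$\phi_{d,n}$ is not an isomorphism onto its image'' is a strictly weaker statement than what \textbf{(ii)} asserts.
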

\begin{proof}
First note that for any $d>0$, the image of the morphism $\phi_{d,n}$ defined in (\ref{morp-BT}) is the subalgebra of $Y_{d,n}$ generated by $g_1,\dots,g_{n-1},e_1,\dots,e_{n-1}$, which is, by Proposition \ref{prop-inv}, isomorphic to $Y_{d,n}^{\mathfrak{S}_d}$. We know that $Y_{d,n}^{\mathfrak{S}_d}$ is free of rank $B_d(n)\,n!\ $.

Assume that $d\geq n$. Then, in this case, we have $B_d(n)=B_n(n)$. Therefore the image of the set (\ref{basis-BT}) by $\phi_{d,n}$ form a spanning set of $Y_{d,n}^{\mathfrak{S}_d}$ of cardinality the rank of $Y_{d,n}^{\mathfrak{S}_d}$. We conclude that the set (\ref{basis-BT}) is linearly independent in $BT_n$, which shows \textbf{(i)}. We also conclude that, when $d\geq n$, the morphism $\phi_{d,n}$ provides an isomorphism between $BT_n$ and the subalgebra of $Y_{d,n}$ generated by $g_1,\dots,g_{n-1},e_1,\dots,e_{n-1}$.

Now let $d<n$. Then in this case $B_d(n)<B_n(n)$ (since, the partition $\{1\}\cup\dots\cup\{n\}$ is not counted in $B_d(n)$). We already know that the rank of $BT_n$ is $B_n(n)\,n!$ while the rank of  the subalgebra of $Y_{d,n}$ generated by $g_1,\dots,g_{n-1},e_1,\dots,e_{n-1}$ is $B_d(n)\,n!$. We conclude that they cannot be isomorphic.
\end{proof}

\begin{Rem}
Item $\textbf{(i)}$  was proved in \cite{Ry} by a different argument relying on the construction of a certain faithful representation of $BT_n$ on a tensor space. The ``if'' part of $\textbf{(ii)}$ was subsequently obtained in \cite{ER}.
\end{Rem}

\subsection{Parametrization of the simple $\mathbb{C}_{\theta} Y_{d,n}^{\mathfrak{S}_d}$-modules}
 
 Let us now apply the results of Subsection \ref{clifford} in order to study the representation theory of  $\mathbb{C}_{\theta} Y_{d,n}^{\mathfrak{S}_d}$ where  $\theta : \mathbb{C}[q,q^{-1}] \to \mathbb{C}$ is a specialization.  We first need to understand precisely the structure of the inertia subgroups. 
 Let $\ulambda \in  \Lambda^e_{\mu}$ with $\mu\in \operatorname{Comp}_d (n)$.  
  There exists $k:=k(\ulambda) \in\{1,\ldots,d\}$ and 
 $1\leq i_1 < \ldots <i_k \leq d$  such that 
 $\lambda^{i_1}\neq \ldots\neq  \lambda^{i_k}$ and  such that 
$$\{\lambda^{i_1},\ldots, \lambda^{i_k}\}=\{\lambda^1,\ldots,\lambda^d\}.$$
We set for all $s=1,\ldots,k$ :
$$J_{s}=\{ j\in \{1,\ldots,d\}\ |\ \lambda^j=\lambda^{i_s}\},$$
so that we have 
$$\{1,\ldots, n\}=J_1\sqcup \ldots \sqcup J_k.$$
Then the inertia subgroup of $V^{\ulambda}$ as defined by \cite{RR} is 
$$\mathfrak{H}_{V^\ulambda} (\mathfrak{S}_d)=\{\sigma\in \mathfrak{S}_d\ |\ \forall s\in \{1,\ldots,k\},  \sigma (J_s)=J_s \}.$$
This is a Young subgroup of $\mathfrak{S}_d$ conjugate to 
$$\mathfrak{S}_{|J_1|}\times \ldots \times 
 \mathfrak{S}_{|J_k|}.$$
The cardinal $|J_1|!\ldots |J_k|!$ of $\mathfrak{H}_{V^\ulambda} (\mathfrak{S}_d)$  
 is denoted by $x(\ulambda)\,$.
 
 \begin{Rem}
If we have $\lambda^i\neq \lambda^j$ for all $i\neq j$, we obtain
 $\mathfrak{H}_{V^\ulambda} (\mathfrak{S}_d)=\{\textrm{Id}\}$ and if $\lambda^1=\ldots=\lambda^d$, we 
  obtain   $\mathfrak{H}_{V^\ulambda} (\mathfrak{S}_d)=\mathfrak{S}_d$. 
  \end{Rem}
 In the general case, note that the simple $\mathfrak{H}_{V^\ulambda} (\mathfrak{S}_d)$-modules 
  are parametrized by the set 
 $$\mathcal{M}(\ulambda):=\{\unu=(\nu^1,\ldots,\nu^k),\ \forall i\in \{1,\ldots, k\},\ \nu^i\in \Lambda_{ |J_i|}\}$$  
 and we denote 
 $$\textrm{Irr} (\mathfrak{H}_{V^\ulambda} (\mathfrak{S}_d))=\{S^{\unu}\ |\ \unu\in \mathcal{M} (\ulambda)\}$$ 
We can then apply Proposition \ref{Cliff} to deduce the following result which provides a parametrization 
 of the simple $\C_{\theta}Y_{d,n}^{\mathfrak{S}_d} $-modules. Such a parametrization has been also obtained in \cite{Ry}  in the semisimple case (that is for $e>>0$ or $e=\infty$).

\begin{Th} Keeping the above notations, 
the simple $\C_{\theta}Y_{d,n}^{\mathfrak{S}_d}$-modules are given by the set 
$$\{ V^{\ulambda}_{S^\unu}\ |\ \ulambda \in \Lambda_{\mu}^e, \mu\in \operatorname{Comp}_d (n), \unu \in \mathcal{M} (\ulambda)\}$$
and we have $V^{\ulambda}_{S^{\unu}}\simeq V^{\ulambda'}_{S^{\unu'}}$ if and only if 
     $\ulambda$ and $\ulambda'$ are in the same $\mathfrak{S}_d$-orbit and $\unu=\unu'$. In addition we have
     $$\operatorname{dim}_{\mathbb{C}} (V^{\ulambda}_{S^{\unu}}) =\frac{y (\ulambda,e)(\prod_{1\leq i\leq k} y (\nu^i,\infty) )}{x(\ulambda)}$$
\end{Th}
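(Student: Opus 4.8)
The plan is to deduce this theorem as a direct application of Proposition \ref{RR} (Clifford theory) and Proposition \ref{Cliff} (the non-vanishing and dimension formula) to the group $G = \mathfrak{S}_d$, combined with the explicit description of the inertia subgroups $\mathfrak{H}_{V^{\ulambda}}(\mathfrak{S}_d)$ given just before the statement. Since the excerpt does not include the verification of the inertia-subgroup description, I would first supply that: starting from Lemma \ref{act-Sd2}, the module $(V^{\ulambda})^{\sigma^{-1}}$ is isomorphic to $(V^{\lambda^{\sigma^{-1}(1)}} \otimes \dots \otimes V^{\lambda^{\sigma^{-1}(d)}}) \otimes W_{\mu^{\sigma}}$. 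Two simple $\mathbb{C}_{\theta} Y_{d,n}$-modules $V^{\ulambda}$ and $V^{\ulambda'}$ are isomorphic if and only if $\ulambda = \ulambda'$ as tuples (this follows from the parametrization recalled in Subsection \ref{subsec-rep}, since the $V_a$ are simple Hecke modules and the decomposition (\ref{simple}) is unique). Hence $\sigma \in \mathfrak{H}_{V^{\ulambda}}(\mathfrak{S}_d)$ iff $\lambda^{\sigma^{-1}(a)} = \lambda^a$ for all $a$, which is exactly the condition that $\sigma$ preserves each of the blocks $J_1, \dots, J_k$ of indices on which $\ulambda$ is constant; this is the Young subgroup $\mathfrak{S}_{|J_1|} \times \dots \times \mathfrak{S}_{|J_k|}$ as claimed, of cardinality $x(\ulambda)$.

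Next I would invoke Proposition \ref{Cliff}: for every $V^{\ulambda} \in \operatorname{Irr}(\mathbb{C}_{\theta} Y_{d,n})$ and every $M \in \operatorname{Irr}(\mathfrak{H}_{V^{\ulambda}}(\mathfrak{S}_d))$ the module $V^{\ulambda}_M$ is non-zero. Since $\mathfrak{H}_{V^{\ulambda}}(\mathfrak{S}_d) \cong \mathfrak{S}_{|J_1|} \times \dots \times \mathfrak{S}_{|J_k|}$, its irreducible modules over $\mathbb{C}$ are exactly the outer tensor products $S^{\unu} = S^{\nu^1} \boxtimes \dots \boxtimes S^{\nu^k}$ with $\nu^i$ a partition of $|J_i|$, i.e. they are indexed by $\unu \in \mathcal{M}(\ulambda)$. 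Then Proposition \ref{RR} gives that $\{ V^{\ulambda}_{S^{\unu}} \}$, as $\ulambda$ ranges over $\coprod_{\mu \mmodels_d n} \Lambda^e_{\mu}$ and $\unu$ over $\mathcal{M}(\ulambda)$, is a complete set of simple $\mathbb{C}_{\theta} Y_{d,n}^{\mathfrak{S}_d}$-modules. The isomorphism criterion of Proposition \ref{RR} states that $V^{\ulambda}_{S^{\unu}} \simeq V^{\ulambda'}_{S^{\unu'}}$ iff there is $g \in \mathfrak{S}_d$ with $V^{\ulambda} \simeq (V^{\ulambda'})^g$ — which, by the above identification of the action on representations, means $\ulambda$ and $\ulambda'$ lie in the same $\mathfrak{S}_d$-orbit — and, after identifying $\mathfrak{H}_{V^{\ulambda}}(\mathfrak{S}_d)$ with $\mathfrak{H}_{V^{\ulambda'}}(\mathfrak{S}_d)$, we have $S^{\unu} \simeq S^{\unu'}$, i.e. $\unu = \unu'$. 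This is exactly the stated classification.

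Finally, for the dimension formula I would apply the displayed equation in Proposition \ref{Cliff}:
\[
\operatorname{dim}_{\mathbb{C}}(V^{\ulambda}_{S^{\unu}}) = \frac{\operatorname{dim}_{\mathbb{C}}(V^{\ulambda}) \cdot \operatorname{dim}_{\mathbb{C}}(S^{\unu})}{\sharp \mathfrak{H}_{V^{\ulambda}}(\mathfrak{S}_d)}\ .
\]
Here $\operatorname{dim}_{\mathbb{C}}(V^{\ulambda}) = y(\ulambda, e)$ by the definition in Subsection \ref{subsec-rep}, $\sharp \mathfrak{H}_{V^{\ulambda}}(\mathfrak{S}_d) = x(\ulambda)$ by definition, and $\operatorname{dim}_{\mathbb{C}}(S^{\unu}) = \prod_{1 \leq i \leq k} \operatorname{dim}_{\mathbb{C}}(S^{\nu^i}) = \prod_{1 \leq i \leq k} y(\nu^i, \infty)$, the last equality because $S^{\nu^i}$ is the Specht/irreducible module of $\mathfrak{S}_{|J_i|}$ (the semisimple, $e = \infty$ case), whose dimension is given by the hook length formula, matching the notation $y(\nu^i, \infty)$. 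Substituting gives precisely
\[
\operatorname{dim}_{\mathbb{C}}(V^{\ulambda}_{S^{\unu}}) = \frac{y(\ulambda, e)\bigl(\prod_{1 \leq i \leq k} y(\nu^i, \infty)\bigr)}{x(\ulambda)}\ .
\]
The bulk of the real content having already been done in Section \ref{clifford}, there is no serious obstacle here; the only point requiring a little care is the bookkeeping in translating the abstract isomorphism criterion of Proposition \ref{RR} into the concrete $\mathfrak{S}_d$-orbit condition on $\ulambda$, which is handled by Lemma \ref{act-Sd2} together with the uniqueness of the decomposition (\ref{simple}).
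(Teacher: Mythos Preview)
Your proposal is correct and follows exactly the approach of the paper, which simply states that the theorem is deduced by applying Proposition \ref{Cliff} (and implicitly Proposition \ref{RR}) together with the explicit description of the inertia subgroups given just before the statement. In fact you supply more detail than the paper does, in particular the verification of the inertia-subgroup description via Lemma \ref{act-Sd2} and the unpacking of the dimension formula.
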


\section{Action of the cyclic group $\mathbb{Z}/p\mathbb{Z}$}

In this section, we consider the cyclic group $\mathbb{Z}/p\mathbb{Z}$ (for $p$ dividing $d$) as a subgroup of $\mathfrak{S}_d$, and we study its action on the Yokonuma--Hecke algebra. We will see that the subalgebra of fixed points can be seen as a new deformation of the complex reflection group of type $G(d,p,n)$. 

\subsection{Subalgebra $Y_{d,n}^{\mathbb{Z}/p\mathbb{Z}}$}\label{subsec-51}

Let us consider the element $\sigma_{d/p}$ of $\mathfrak{S}_d$ such that 
for all $a\in \{1,\ldots,d\}$ we have $\sigma_{d/p} (a)=a+d/p (\textrm{mod }d)$.

Let $I\in\cP_d(n)$. By construction of the idempotent $E_I$, we have $t_iE_I=\xi_{\pos_i(I)}E_I$ for any $i\in\{1,\dots,n\}$. We recall that the $d$-th roots of unity $\{\xi_1,\dots,\xi_d\}$ are ordered such that $\xi_1:=\textrm{exp}(2i\pi/d)$ and $\xi_a=\xi_1^a$ for $a=1,\ldots,d$. Using Formula (\ref{form-pos}), we obtain 
\[t_iE_{I^{\sigma_{d/p}}}=\xi_{\pos_i(I)+d/p(\textrm{mod }d)}E_{I^{\sigma_{d/p}}}=\xi_1^{d/p}t_iE_{I^{\sigma_{d/p}}}\ .\]
Since this is true for any $I\in\cP_d(n)$, we have that, for all $i=1,\ldots,n$, the action of $\sigma_{d/p}$ on the generators $t_i$ is given by:
\begin{equation}\label{act-ti}
t_i^{\sigma_{d/p}}=\xi_1^{d/p} t_i\ ,
\end{equation}
 where $\xi_1^{d/p}=\textrm{exp}(2i\pi/p)$ is thus a primitive root of order $p$. Note that the action 
 on $t_i$ of an arbitrary element of $\mathfrak{S}_d$ is in general far more complicated to describe. 
We consider the action on $Y_{d,n}$ of the subgroup generated by $\sigma_{d/p}$, which is isomorphic to $\mathbb{Z}/p\mathbb{Z}$. We denote $Y_{d,n}^{\mathbb{Z}/p\mathbb{Z}}$ the corresponding subalgebra of fixed points.

We will use the following notation. For $i=1,\ldots,n-1$, we set 
$$a_{i}:=t_{i}^{-1}t_{i+1}\ ,$$
and we set $a_0=t_1^p$. Note that we have $a_0^{d/p}=1$ and $a_i^{d}=1$ for all $i=1,\ldots,n-1$.
The following proposition gives a basis of $Y_{d,n}^{\mathbb{Z}/p\mathbb{Z}}$. 
\begin{Prop}\label{base}
 A  basis of $Y_{d,n}^{\mathbb{Z}/p\mathbb{Z}}$ is given by the following set
\begin{equation}\label{basisfixed-t}
\mathfrak{B}_1:=\{\ t_1^{\alpha_1}\dots t_n^{\alpha_n}g_w\ |\ \alpha_1,\dots,\alpha_n\in\{0,\dots,d-1\}\,,\ \alpha_1+\ldots+\alpha_n\equiv 0 (\operatorname{mod }p)\,,\ w\in\mathfrak{S}_n\ \}\ .
\end{equation}
In addition, the elements of this basis can be rewritten as follows: 
\begin{equation}\label{basisfixed-a}
\{\ a_0^{\beta_0}\dots a_{n-1}^{\beta_{n-1}}g_w\ |\ \beta_1,\dots,\beta_{n-1}\in\{0,\dots,d-1\}\,,\ \beta_0\in \{0,\ldots,d/p-1\}\,,\ w\in\mathfrak{S}_n \}\ .
\end{equation}
In particular we have 
$$\operatorname{rk} (Y_{d,n}^{\mathbb{Z}/p\mathbb{Z}})=d^n n!/p\ .$$
\end{Prop}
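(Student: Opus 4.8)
The plan is to start from the known basis $\{t_1^{\alpha_1}\dots t_n^{\alpha_n}g_w\}$ of $Y_{d,n}$ (equation (\ref{basis-t})) and use the explicit description (\ref{act-ti}) of the action of the generator $\sigma_{d/p}$ of $\mathbb{Z}/p\mathbb{Z}$. Since $t_i^{\sigma_{d/p}}=\xi_1^{d/p}t_i$ and $g_w^{\sigma_{d/p}}=g_w$, we get
\[
(t_1^{\alpha_1}\dots t_n^{\alpha_n}g_w)^{\sigma_{d/p}}=(\xi_1^{d/p})^{\alpha_1+\dots+\alpha_n}\,t_1^{\alpha_1}\dots t_n^{\alpha_n}g_w\ ,
\]
and because $\xi_1^{d/p}$ is a primitive $p$-th root of unity, the basis element $t_1^{\alpha_1}\dots t_n^{\alpha_n}g_w$ is fixed by $\sigma_{d/p}$ if and only if $\alpha_1+\dots+\alpha_n\equiv 0\pmod p$. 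As the action of $\mathbb{Z}/p\mathbb{Z}$ permutes (up to scalars) the basis (\ref{basis-t}) and in fact scales each basis element, the fixed subspace $Y_{d,n}^{\mathbb{Z}/p\mathbb{Z}}$ is exactly the span of those basis elements on which the scalar is $1$; this gives that $\mathfrak{B}_1$ is a basis.

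Next I would pass from the description in terms of the $t_i$'s to the one in terms of $a_0=t_1^p$ and $a_i=t_i^{-1}t_{i+1}$. The point is the change of variables: given $\alpha_1,\dots,\alpha_n$, write $\alpha_1+\dots+\alpha_n=pm$ and express $t_1^{\alpha_1}\dots t_n^{\alpha_n}$ in terms of $a_0^{\beta_0}a_1^{\beta_1}\dots a_{n-1}^{\beta_{n-1}}$. Concretely, one has $t_j=t_1\,a_1a_2\cdots a_{j-1}$, so $t_1^{\alpha_1}\dots t_n^{\alpha_n}$ equals $t_1^{\alpha_1+\dots+\alpha_n}$ times a monomial in the $a_i$'s (using that the $t_i$'s, hence the $a_i$'s and $t_1$, commute), and $t_1^{\alpha_1+\dots+\alpha_n}=t_1^{pm}=a_0^{m}$. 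Conversely any monomial $a_0^{\beta_0}a_1^{\beta_1}\dots a_{n-1}^{\beta_{n-1}}$ lies in $\mathcal{T}_{d,n}$ and is $\sigma_{d/p}$-fixed (each $a_i$ for $i\geq 1$ is visibly fixed since the two scalars cancel, and $a_0=t_1^p$ is fixed since $(\xi_1^{d/p})^p=1$), so all these monomials lie in $Y_{d,n}^{\mathbb{Z}/p\mathbb{Z}}$. One then checks that, with $\beta_0$ ranging over $\{0,\dots,d/p-1\}$ (using $a_0^{d/p}=1$) and $\beta_1,\dots,\beta_{n-1}$ over $\{0,\dots,d-1\}$ (using $a_i^d=1$), the number of such monomials is $(d/p)\cdot d^{n-1}=d^n/p$, which matches $|\{(\alpha_1,\dots,\alpha_n)\in\{0,\dots,d-1\}^n : \sum\alpha_i\equiv 0\ (p)\}|$. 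So the two families span the same space and have the same cardinality, hence the second set is also a basis.

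Finally, the rank statement is immediate: the number of elements of $\mathfrak{B}_1$ is $(d^n/p)\cdot n!$, since for each of the $n!$ choices of $w\in\mathfrak{S}_n$ there are $d^n/p$ admissible tuples $(\alpha_1,\dots,\alpha_n)$. The only genuinely delicate point is the passage between the two parametrizations: I must be careful that the map $(\alpha_1,\dots,\alpha_n)\mapsto(\beta_0,\dots,\beta_{n-1})$ coming from the substitution $t_j=t_1 a_1\cdots a_{j-1}$ is a bijection between the index set $\{\alpha\in\{0,\dots,d-1\}^n:\sum\alpha_i\equiv0\ (p)\}$ and the index set $\{0,\dots,d/p-1\}\times\{0,\dots,d-1\}^{n-1}$ once one reduces exponents modulo the orders of the $a_i$'s. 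This is a routine but slightly fiddly counting/linear-algebra-over-$\mathbb{Z}/d\mathbb{Z}$ argument; I expect it to be the main (mild) obstacle, everything else being a direct consequence of (\ref{basis-t}) and (\ref{act-ti}).
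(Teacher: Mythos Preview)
Your proposal is correct and follows essentially the same route as the paper. The paper likewise observes that each standard basis element is an eigenvector for $\sigma_{d/p}$ with eigenvalue $(\xi_1^{d/p})^{\sum\alpha_i}$, giving $\mathfrak{B}_1$ immediately, and then performs the same change of variables (written iteratively as $t_1^{\alpha_1}\cdots t_n^{\alpha_n}=t_1^{\alpha_1}\cdots t_{n-1}^{\alpha_{n-1}+\alpha_n}a_{n-1}^{\alpha_n}=\cdots=a_0^{(\sum\alpha_i)/p}a_1^{\sum_{i\geq2}\alpha_i}\cdots a_{n-1}^{\alpha_n}$) followed by the cardinality count; the ``fiddly bijection'' you anticipate is handled exactly by this counting argument.
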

\begin{proof}
Consider the standard basis (\ref{basis-t}) of $Y_{d,n}$. Each of these elements is multiplied by a power of $\xi_1^{d/p}$ under the action of $\sigma_{d/p}$. It is easy to see that the elements in $\mathfrak{B}_1$ are precisely the ones which are multiplied by 1. Therefore, for an arbitrary element $h$ of $Y_{d,n}$ written as a linear combination of the standard basis elements, we have $h\in Y_{d,n}^{\mathbb{Z}/p\mathbb{Z}}$ if and only if $h$ is a linear combination of the elements in $\mathfrak{B}_1$. This shows that $\mathfrak{B}_1$ is a generating set of $Y_{d,n}^{\mathbb{Z}/p\mathbb{Z}}$. The linear independence of the elements of $\mathfrak{B}_1$ follows from the fact that $\mathfrak{B}_1$ is a subset of the standard basis of $Y_{d,n}$.

Now let us consider an element  $y=t_1^{\alpha_1}\dots t_n^{\alpha_n}$
   with $\alpha_1,\dots,\alpha_n\in\{1,\dots,d\}$
  and  $\alpha_1+\ldots+\alpha_n\equiv 0 (\textrm{mod }p)$. We have:
  $$\begin{array}{rcl}
  y&=& t_1^{\alpha_1}\ldots t_{n-1}^{\alpha_{n-1}+\alpha_n} a_{n-1}^{\alpha_n}\\
  &=&\ldots \\
  &=&t_1^{\sum_{1\leq i\leq n} \alpha_i} a_{1}^{\sum_{2\leq i\leq n} \alpha_i}\ldots a_{n-2}^{\alpha_{n-1}+\alpha_n} a_{n-1}^{\alpha_n}\\[0.4em]
  &=&a_0^{(\sum_{1\leq i\leq n} \alpha_i)/p} a_{1}^{\sum_{2\leq i\leq n} \alpha_i}\ldots a_{n-2}^{\alpha_{n-1}+\alpha_n} a_{n-1}^{\alpha_n}\ .
  \end{array}$$
Taking into account that $a_0$ is of order $d/p$ and, for $i=1,\ldots,n-1$,  $a_i$ is of order $d$, this shows that any element of $\mathfrak{B}_1$ is also an element of (\ref{basisfixed-a}). As the two sets  have the same cardinality, they coincide. This concludes the proof. 
\end{proof}

\begin{Prop}
There is a presentation of $Y_{d,n}^{\mathbb{Z}/p\mathbb{Z}}$  by generators
 $g_1, g_2,\ldots g_{n-1}, a_0, a_1,\ldots,a_{n-1}$
 and the following relations:
\begin{itemize}
\item \rm{\textbf{(R1)}}\ :\ \ \ $\left\{\begin{array}{rclcl}
g_ig_j & = & g_jg_i &\quad & \text{for $i,j\in\{1,\ldots,n-1\}$ such that $\vert i-j\vert > 1$\,,}\\[0.2em]
g_ig_{i+1}g_i & = & g_{i+1}g_ig_{i+1} && \text{for $i\in\{1,\ldots,n-2\}$\,,}\\[0.2em]
g_i^2  & = & 1 + (q-q^{-1}) \, e_{i} \, g_i &\quad& \text{for $i\in\{1,\ldots,n-1\}$\,,}
\end{array}\right.$
\item  \rm{\textbf{(R2)}}\ :\ \ \ $\left\{\begin{array}{rclcl}
a_ia_j & = & a_ja_i &\quad & \text{for $i,j\in\{0,\ldots,n-1\}$ ,}\\[0.2em]
a_0^{d/p} & = & 1\,, && \\[0.2em]
a_i^d  & = & 1 &\quad& \text{for $i\in\{1,\ldots,n-1\}$\,,}
\end{array}\right.$
\item  \rm{\textbf{(R3)}}\ :\ \ \ $g_i a_0 g_i^{-1}=\left\{ \begin{array}{ll} a_0 a_1^p\qquad & \text{if $i=1$\,,}\\[0.2em]
a_0 & \text{for $i=2,\dots,n-1$\,,}\end{array}  \right.$
\item  \rm{\textbf{(R4)}}\ :\ \ \ $g_i a_j g_i^{-1}=\left\{ \begin{array}{ll} a_{j-1} a_j \qquad & \text{if $i=j-1$\,,}\\[0.2em]
a_j^{-1} & \text{if $i=j$\,,}\\[0.2em]
a_ja_{j+1} & \text{if $i=j+1$\,,}\\[0.2em]
a_j & \text{otherwise\,,} \end{array} \right.\,,\ \ \ \ \ \text{for $i,j=1,\dots,n-1$\,,}$
\end{itemize}
where $e_i:=\displaystyle \frac{1}{d}\sum_{1\leq s\leq d}a_i^s$\,, \ for $i=1,\dots,n-1$.
\end{Prop}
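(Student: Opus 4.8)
The plan is the standard ``surjection plus dimension count'' argument. Let $A$ be the abstract $\C[q,q^{-1}]$-algebra presented by generators $g_1,\dots,g_{n-1},a_0,\dots,a_{n-1}$ and relations \textbf{(R1)}--\textbf{(R4)}, with $e_i:=\frac1d\sum_{1\le s\le d}a_i^s$ throughout. I would first build a surjective algebra homomorphism $A\twoheadrightarrow Y_{d,n}^{\mathbb{Z}/p\mathbb{Z}}$, then show $A$ is spanned by at most $d^nn!/p$ elements, and finally invoke $\operatorname{rk}(Y_{d,n}^{\mathbb{Z}/p\mathbb{Z}})=d^nn!/p$ from Proposition \ref{base} to conclude that the spanning set is a basis of $A$ and that the homomorphism is an isomorphism.

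\emph{The homomorphism.} I would check that $g_i\mapsto g_i$, $a_i\mapsto t_i^{-1}t_{i+1}$ (for $i=1,\dots,n-1$) and $a_0\mapsto t_1^p$ respects \textbf{(R1)}--\textbf{(R4)} and has image inside $Y_{d,n}^{\mathbb{Z}/p\mathbb{Z}}$. The latter is immediate from $t_i^{\sigma_{d/p}}=\xi_1^{d/p}t_i$, which gives $(t_i^{-1}t_{i+1})^{\sigma_{d/p}}=t_i^{-1}t_{i+1}$ and $(t_1^p)^{\sigma_{d/p}}=(\xi_1^{d/p})^pt_1^p=t_1^p$. Relations \textbf{(R1)} reduce to the defining relations of $Y_{d,n}$ once one notes that $\frac1d\sum_s(t_i^{-1}t_{i+1})^s=\frac1d\sum_st_i^st_{i+1}^{-s}=e_i$ (reindex $s\mapsto d-s$ and use $t_j^d=1$); \textbf{(R2)} follows from the $t_j$ commuting and $t_j^d=1$; and \textbf{(R3)}, \textbf{(R4)} are verified one case at a time from $g_it_j=t_{\pi_i(j)}g_i$, e.g. $g_1a_0g_1^{-1}=g_1t_1^pg_1^{-1}=t_2^p=t_1^p(t_1^{-1}t_2)^p=a_0a_1^p$, and similarly according to how $\pi_i$ moves $\{j,j+1\}$. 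Surjectivity is then automatic: by Proposition \ref{base} the images of the elements $a_0^{\beta_0}\cdots a_{n-1}^{\beta_{n-1}}g_w$ are exactly the basis (\ref{basisfixed-a}) of $Y_{d,n}^{\mathbb{Z}/p\mathbb{Z}}$.

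\emph{A spanning set of $A$.} Let $A_0\subseteq A$ be the subalgebra generated by $a_0,\dots,a_{n-1}$; by \textbf{(R2)} it is a quotient of the group algebra of $\mathbb{Z}/(d/p)\mathbb{Z}\times(\mathbb{Z}/d\mathbb{Z})^{n-1}$, hence spanned by the $d^n/p$ monomials $a_0^{\beta_0}\cdots a_{n-1}^{\beta_{n-1}}$ with $\beta_0\in\{0,\dots,d/p-1\}$ and $\beta_i\in\{0,\dots,d-1\}$. Since \textbf{(R4)} with $i=j$ forces $g_ia_i=a_i^{-1}g_i$, one gets $g_ie_i=e_ig_i$, and then the quadratic relation in \textbf{(R1)} makes $g_i$ invertible in $A$ with $g_i^{-1}=g_i-(q-q^{-1})e_i$ (so the $g_i^{-1}$ occurring in \textbf{(R3)}--\textbf{(R4)} are unambiguous); consequently \textbf{(R3)} and \textbf{(R4)} yield $g_i^{\pm1}A_0\subseteq A_0g_i^{\pm1}$, i.e. any $a$-monomial can be pushed to the left past any $g_i^{\pm1}$, becoming another $a$-monomial. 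Plugging this into the usual straightening argument for Hecke-type algebras (the very one used to prove that (\ref{basis-t}) is a basis of $Y_{d,n}$: use the braid relations to produce a factor $g_i^2$, rewrite it via $g_i^2=1+(q-q^{-1})e_ig_i$, and induct on word length, moving to the left the $a$-parts that arise from the $e_i$) gives $A=\sum_{w\in\mathfrak{S}_n}A_0g_w$. Hence $A$ is spanned by the $d^nn!/p$ elements $a_0^{\beta_0}\cdots a_{n-1}^{\beta_{n-1}}g_w$ with the $\beta$'s in the above ranges and $w\in\mathfrak{S}_n$.

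\emph{Conclusion and main obstacle.} The homomorphism constructed above carries this spanning set of $A$ bijectively onto the basis (\ref{basisfixed-a}) of $Y_{d,n}^{\mathbb{Z}/p\mathbb{Z}}$, both sets having cardinality $d^nn!/p$ by Proposition \ref{base}; therefore the spanning set is linearly independent, so it is a basis of $A$ and the homomorphism is an isomorphism, which proves the proposition. The one genuinely non-formal step is the straightening identity $A=\sum_wA_0g_w$; it is routine in that it copies the basis theorem for $Y_{d,n}$ almost verbatim, the only new feature being that the $g_i$ interact with $A_0$ through the algebra automorphisms recorded in \textbf{(R3)}--\textbf{(R4)} rather than through permutations of the $t_j$'s. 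For the spanning statement only the inclusion $g_iA_0\subseteq A_0g_i$ is used — not any well-definedness of a normal form — so no real difficulty arises, and linear independence is then supplied for free by the dimension comparison with Proposition \ref{base}.
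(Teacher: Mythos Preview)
Your proposal is correct and follows essentially the same approach as the paper: verify the relations hold in $Y_{d,n}$ to get a surjection from the abstract algebra onto $Y_{d,n}^{\mathbb{Z}/p\mathbb{Z}}$, show the abstract algebra is spanned by the set (\ref{basisfixed-a}), and conclude by the rank count of Proposition~\ref{base}. Your write-up is in fact more explicit than the paper's (which dispatches the straightening step with the single phrase ``clearly allows to write any element\ldots''), notably in observing that $g_ie_i=e_ig_i$ is needed before one can even speak of $g_i^{-1}$ in $A$.
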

\begin{proof}
The defining relations (R1)--(R4) are easily checked to be satisfied in $Y_{d,n}$. Let us denote $\mathfrak{Y}_{d,p,n}$ the algebra defined with the above presentation (by a slight abuse of notation, we keep the same notations than for the elements of $Y_{d,n}$). Thus we have an algebra homomorphism:
$$\Theta : \mathfrak{Y}_{d,p,n} \to Y_{d,n}^{\mathbb{Z}/p\mathbb{Z}}$$
which is obviously surjective, using Proposition \ref{base}.

Now, the relations (R1)--(R4) clearly allows to write any element of $\mathfrak{Y}_{d,p,n}$ as a linear combination of elements in (\ref{basisfixed-a}) (seen as a subset of elements of $\mathfrak{Y}_{d,p,n}$). So (\ref{basisfixed-a}) is a generating set for $\mathfrak{Y}_{d,p,n}$. It is also linearly independent since it is in $Y_{d,n}^{\Z/p\Z}$ and because $\Theta$ is a morphism. As a consequence, (\ref{basisfixed-a}) is also a basis of $\mathfrak{Y}_{d,p,n}$ and $\Theta$ is thus bijective. 
\end{proof}

The following lemma, that we will need later, reduces the number of generators and of relations in the above presentation of $Y_{d,n}^{\mathbb{Z}/p\mathbb{Z}}$.
\begin{lemma}\label{lem-pres}
There is a presentation of $Y_{d,n}^{\mathbb{Z}/p\mathbb{Z}}$  by generators
 $g_1, g_2,\ldots g_{n-1}, a_0, a_1$ and defining relations:
\[\text{\rm{\textbf{(R1)}}\,, \ \rm{\textbf{(R3)}}\,, \ \rm{\textbf{(R4)}} for $j=1$\,, \ \ $a_0^{d/p}=1$\,,\ \ \ $a_0a_1=a_1a_0$\ \ \ and\ \ \ $a_1a_2=a_2a_1$\ ,}\]
where the elements $a_i$, $i=2,\dots,n-1$, and $e_i$, $i=1,\dots,n-1$ are defined by:
\[a_i:=g_{i-1}g_i a_{i-1}g^{-1}_{i}g^{-1}_{i-1}\,,\ \ \ i=2,\dots,n-1\,,\ \ \ \ \ \text{and}\ \ \ \ \ \  e_i:=\displaystyle \frac{1}{d}\sum_{1\leq s\leq d}a_i^s\,, \ \ i=1,\dots,n-1\,.\]
\end{lemma}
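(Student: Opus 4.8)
The plan is to show that the new, smaller presentation --- call its algebra $\mathfrak{Y}'_{d,p,n}$ --- is isomorphic to $\mathfrak{Y}_{d,p,n}$, which by the previous proposition is $Y_{d,n}^{\mathbb{Z}/p\mathbb{Z}}$. First I would define $a_i$ for $i=2,\dots,n-1$ inside $\mathfrak{Y}'_{d,p,n}$ by the recursive formula $a_i:=g_{i-1}g_ia_{i-1}g_i^{-1}g_{i-1}^{-1}$, and $e_i:=\frac1d\sum_{1\le s\le d}a_i^s$, so that \textbf{(R1)} and the definition of $e_i$ in terms of $a_i$ make sense in $\mathfrak{Y}'_{d,p,n}$. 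Since \textbf{(R1)}, \textbf{(R3)}, \textbf{(R4)} for $j=1$, $a_0^{d/p}=1$, $a_0a_1=a_1a_0$, $a_1a_2=a_2a_1$ are all consequences of the relations \textbf{(R1)}--\textbf{(R4)} (with these definitions of the $a_i$), there is a surjective homomorphism $\mathfrak{Y}'_{d,p,n}\twoheadrightarrow\mathfrak{Y}_{d,p,n}=Y_{d,n}^{\mathbb{Z}/p\mathbb{Z}}$. It then suffices to prove that $\mathfrak{Y}'_{d,p,n}$ is spanned over $\C[q,q^{-1}]$ by the $d^nn!/p$ elements of type (\ref{basisfixed-a}); being the image of a spanning set of the correct cardinality under the surjection to a free module of that rank, it will be a basis and the surjection an isomorphism.

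The core of the argument is therefore a spanning (straightening) computation in $\mathfrak{Y}'_{d,p,n}$. The first task is to recover all of relations \textbf{(R2)} and \textbf{(R4)} from the reduced set. The key step is to prove, by induction on $i$ using the braid relations in \textbf{(R1)} and \textbf{(R4)} for $j=1$, that each $a_i$ ($i\ge 1$) satisfies $a_i^d=1$, that the $a_i$ pairwise commute, that they commute with $a_0$, and that $g_ka_jg_k^{-1}$ is given by the formulas in \textbf{(R4)} and \textbf{(R3)} for all admissible $k,j$. Concretely: from $a_{i+1}=g_ig_{i+1}a_ig_{i+1}^{-1}g_i^{-1}$ one transports known conjugation/commutation identities about $a_i$ to $a_{i+1}$; the hypotheses $a_0a_1=a_1a_0$ and $a_1a_2=a_2a_1$ provide the base cases needed to get commutativity off the ground (one checks, e.g., that conjugating $a_1a_2=a_2a_1$ by suitable $g_k$'s and using $|i-j|>1$ commutations in \textbf{(R1)} propagates commutativity to all pairs $a_i,a_j$, and similarly $a_0a_1=a_1a_0$ plus \textbf{(R3)}/\textbf{(R4)} gives $a_0a_i=a_ia_0$ for all $i$). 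Once the full \textbf{(R2)}--\textbf{(R4)} hold, $\mathfrak{Y}'_{d,p,n}$ visibly satisfies the presentation of $\mathfrak{Y}_{d,p,n}$, so we have a homomorphism $\mathfrak{Y}_{d,p,n}\to\mathfrak{Y}'_{d,p,n}$ inverse to the one above, and we are done.

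Alternatively --- and perhaps more cleanly for the write-up --- once \textbf{(R2)}, \textbf{(R3)}, \textbf{(R4)} are established, one simply invokes the previous proposition directly: the relations \textbf{(R1)}--\textbf{(R4)} hold in $\mathfrak{Y}'_{d,p,n}$, hence $\Theta$ factors as $\mathfrak{Y}_{d,p,n}\to\mathfrak{Y}'_{d,p,n}\to Y_{d,n}^{\mathbb{Z}/p\mathbb{Z}}$, the composite being the isomorphism of the previous proposition; since the first arrow is onto (the generators $g_k,a_0,a_1$ of $\mathfrak{Y}'_{d,p,n}$ are among the generators of $\mathfrak{Y}_{d,p,n}$, and $a_2,\dots,a_{n-1}$ are built from them by the same recursion) and the composite is injective, the first arrow is injective too, so $\mathfrak{Y}_{d,p,n}\cong\mathfrak{Y}'_{d,p,n}\cong Y_{d,n}^{\mathbb{Z}/p\mathbb{Z}}$. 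This avoids the separate spanning count.

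The main obstacle is the inductive bookkeeping in the second step: deriving the commutativity of all the $a_i$ among themselves and with $a_0$, and the complete conjugation table \textbf{(R4)}, starting from only the three ``seed'' relations $a_0a_1=a_1a_0$, $a_1a_2=a_2a_1$, \textbf{(R3)}, \textbf{(R4)}$|_{j=1}$ together with \textbf{(R1)}. In particular one must be careful that the recursion $a_{i+1}=g_ig_{i+1}a_ig_{i+1}^{-1}g_i^{-1}$ is compatible with conjugation by the \emph{other} $g_k$'s: the non-trivial check is that $g_{i+2}a_{i+1}g_{i+2}^{-1}=a_{i+1}a_{i+2}$ and $g_ia_{i+1}g_i^{-1}=a_ia_{i+1}$ follow from the $j=i$ and $j=i+1$ instances of the already-known part of \textbf{(R4)} plus the braid relations, which requires massaging products of three or four $g$'s. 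Everything else --- the passage from spanning set of the right size to basis, and the identification with the earlier presentation --- is formal.
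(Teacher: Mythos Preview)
Your proposal is correct and follows essentially the same route as the paper: reduce the question to verifying that the full set of relations \textbf{(R1)}--\textbf{(R4)} can be recovered in the algebra defined by the reduced relation set, then conclude by the obvious pair of inverse homomorphisms with $\mathfrak{Y}_{d,p,n}$. The paper carries out exactly the inductive bookkeeping you describe as the ``main obstacle'' --- first deriving $a_i^d=1$ from conjugacy with $a_0a_1^p$, then verifying \textbf{(R4)} case-by-case by induction on $j$, and finally the pairwise commutativity of the $a_i$'s --- so your outline matches the paper's argument.
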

\begin{proof} It is easy to see that the elements $a_i$ ($i=2,\ldots,n-1$) and the elements 
 $e_i$ ($i=1,\ldots,n-1$) satisfy the relations of the lemma in $Y_{d,n}$.  
As the relations of the lemma are contained in (R1)--(R4), we only have to check that Relations (R1)--(R4) are consequences of the relations listed in the lemma.  Note first that, using (R3), we have that $a_0a_1^p$ is conjugate to $a_0$. As $a_0$ and $a_1$ commutes and $a_0^{d/p}=1$, we obtain that $a_1^d=1$. Since all $a_i$, $i=1,\dots,n-1$, are conjugate, we thus have that $a_i^d=1$ for $i=1,\dots,d-1$.

\vskip .2cm
We now check Relations (R4) by induction on $j$ (for $j=1$ it is listed in the lemma). Let $j>1$. Note that, by induction hypothesis, we have $a_j=a_{j-1}^{-1}g_{j}a_{j-1}g_j^{-1}$. We will treat several cases.
\begin{itemize}
\item Let $i<j-2$ or $i>j+1$. Then $g_i$ commutes with $a_{j-1}$ by induction hypothesis and $g_i$ commutes with $g_{j}$. So $g_i$ commutes with $a_j=a_{j-1}^{-1}g_{j}a_{j-1}g_j^{-1}$.

\item Let $i=j-2$. By induction  hypothesis, we have $g_{j-2}a_{j-1}=a_{j-2}a_{j-1}g_{j-2}$ and, taking inverses, we have also $g_{j-2}a_{j-1}^{-1}=a_{j-1}^{-1}a_{j-2}^{-1}g_{j-2}$. Therefore we have, using that $g_{j-2}$ and $g_j$ commute,
\[g_{j-2}a_{j}=g_{j-2}\cdot a_{j-1}^{-1}\,g_{j}\,a_{j-1}\,g_j^{-1}=a_{j-1}^{-1}a_{j-2}^{-1}\,g_{j}\,a_{j-2}a_{j-1}\,g_j^{-1}\cdot g_{j-2}= a_jg_{j-2}\ ,\]
where we used in the last equality that $g_j$ commutes with $a_{j-2}$ by induction hypothesis.

\item Let $i=j-1$. By induction hypothesis, we have $g_{j-1}a_{j-1}^{-1}=a_{j-1}g_{j-1}$. Therefore, we obtain
\[g_{j-1}\cdot a_j\cdot g_{j-1}^{-1}=g_{j-1}\cdot a_{j-1}^{-1}g_{j}a_{j-1}g_j^{-1}\cdot g_{j-1}^{-1}=a_{j-1}\cdot g_{j-1}g_{j}a_{j-1}g_j^{-1}g_{j-1}^{-1}=a_{j-1}a_j \ ,\]
by definition of $a_j$.

\item Let $i=j$. We have:
\[g_ja_j=g_j\cdot g_{j-1}g_ja_{j-1}g_j^{-1}g_{j-1}^{-1}=g_{j-1}g_ja_{j-1}^{-1}g_j^{-1}g_{j-1}^{-1}\cdot g_j=a_{j}^{-1}g_j\ ,\]
where we used successively the definition of $a_j$, the braid relation $g_j\cdot g_{j-1}g_j=g_{j-1}g_j\cdot g_{j-1}$, the induction hypothesis $g_{j-1}a_{j-1}=a_{j-1}^{-1}g_{j-1}$ and the braid relation $g_{j-1}\cdot g_j^{-1}g_{j-1}^{-1}= g_j^{-1}g_{j-1}^{-1}\cdot g_j$.

\item Let $i=j+1$. By definition, we have $a_{j+1}=g_jg_{j+1}a_jg_{j+1}^{-1}g_j^{-1}$. We replace $a_j$ by $a_{j-1}^{-1}g_{j}a_{j-1}g_j^{-1}$ and we find
\[a_{j+1}=g_jg_{j+1}\cdot a_{j-1}^{-1}g_{j}a_{j-1}g_j^{-1}\cdot g_{j+1}^{-1}g_j^{-1}=g_ja_{j-1}^{-1}\cdot g_{j+1}g_jg_{j+1}^{-1}\cdot a_{j-1}g_j^{-1}g_{j+1}^{-1}\ ,\]
where we used the braid relation $g_j^{-1}g_{j+1}^{-1}g_j^{-1}=g_{j+1}^{-1}g_j^{-1}g_{j+1}^{-1}$ and that $g_{j+1}$ commutes with $a_{j-1}$ by induction hypothesis. Then we transform the expression $g_{j+1}g_jg_{j+1}^{-1}$ into $g_j^{-1}g_{j+1}g_j$ and we use that $g_ja_{j-1}g_j^{-1}=a_{j-1}a_j$ (and its inverse) by induction hypothesis. We obtain
\[a_{j+1}=a_j^{-1}a_{j-1}^{-1}\cdot g_{j+1}\cdot a_{j-1}a_j\cdot g_{j+1}^{-1}=a_j^{-1}g_{j+1}a_jg_{j+1}^{-1}\ ,\]
where we used in the last equality that $g_{j+1}$ commutes with $a_{j-1}$ by induction hypothesis. We conclude that $g_{j+1}a_jg_{j+1}^{-1}=a_ja_{j+1}$.
\end{itemize}

It remains to check that $a_ia_j=a_ja_i$ for any $i,j\in\{0,1\dots,n-1\}$ so that (R2) will be satisfied. It is enough to consider $j>i$. First let $i=0$. We use induction on $j$. For $j=1$, $a_0a_1=a_1a_0$ is listed in the lemma. Let $j>1$. From Relation (R4), we have $a_j=a_{j-1}^{-1}g_ja_{j-1}g_j^{-1}$. Since $a_0$ commutes with $g_j$ by (R3) and $a_0$ commutes with $a_{j-1}$ by induction hypothesis, we obtain that $a_0$ commutes with $a_j$.

Let now $i>0$. We treat several cases.
\begin{itemize}
\item Let $i=1$. We use induction on $j$. For $j=2$, $a_1a_2=a_2a_1$ is listed in the lemma. Let $j>2$. We write $a_j=a_{j-1}^{-1}g_ja_{j-1}g_j^{-1}$, from Relation (R4), and then we use that $a_1$ commutes with $g_j$, from (R4), and with $a_{j-1}$ by induction hypothesis. So $a_1$ commutes with $a_j$.

\item Let now $i>1$ and $j>i+1$. We use induction on $i$. We write $a_i=a_{i-1}^{-1}g_ia_{i-1}g_i^{-1}$. By (R4), $a_j$ commutes with $g_i$ and $a_j$ commutes with $a_{i-1}$ by induction hypothesis. So $a_i$ and $a_j$ commute.

\item It remains to consider $i>1$ and $j=i+1$. We still use induction on $i$. Since, from (R4), $a_i=a_{i-1}^{-1}g_ia_{i-1}g_i^{-1}$ and $a_{i+1}$ commutes with $a_{i-1}$ by induction hypothesis, we have that $a_{i+1}$ commutes with $a_i$ if and only if $a_{i+1}$ commutes with $g_ia_{i-1}g_i^{-1}$. Recall that, by (R4), we have $g_ia_{i+1}g_i^{-1}=a_ia_{i+1}$. This easily implies the following:
\[a_{i+1}g_i^{-1}=g_i^ {-1}a_ia_{i+1}\ \ \ \ \ \ \text{and}\ \ \ \ \ \ \ a_{i+1}g_i=a_i^{-1}g_ia_{i+1}\ .\]
We use these two relations together with the fact that $a_{i+1}$ and $a_{i-1}$ commute to calculate:
\[a_{i+1}\cdot g_ia_{i-1}g_i^{-1}=a_i^{-1}g_ia_{i+1}\cdot a_{i-1}g_i^{-1}=a_i^{-1}g_ia_{i-1}\cdot a_{i+1} g_i^{-1}=a_i^{-1}g_ia_{i-1}g_i^{-1}a_i\cdot a_{i+1}\]
It remains to note that $a_i$ commutes with $g_ia_{i-1}g_i^{-1}$. This is equivalent to say that $a_i$ commutes with $a_{i-1}$, which is known by induction hypothesis. This concludes the verification that $a_{i+1}$ commutes with $a_i$ and the proof of the lemma.
\end{itemize}
\end{proof}

\begin{Rem}\label{rem-lem} $\ $

\begin{itemize}

\item[\textbf{(i)}] Consider the group generated by $g_1,\dots,g_{n-1},a_0,a_1,\dots,a_{n-1}$ with all relations (R1)--(R4) except the quadratic relation for the $g_i$'s and $a_0^{d/p}=a_i^d=1$. Then it is easy to see, looking at the proof, that the statement of the lemma (with the quadratic relation for the $g_i$'s and $a_0^{d/p}=1$ removed) is still valid for this group.

\item[\textbf{(ii)}] We apply the preceding remark to the case $p=1$. In this case, the group is the framed braid group $B_n\ltimes\Z^n$, where $B_n$ is the usual braid group. Denote in this case $a_0=t_1$ to be consistent. With (R3), element $a_1$ is now expressed as $a_1=t_1^{-1}g_1t_1g_1^{-1}$, and thus can be removed from the set of generators. The lemma gives then a presentation of this group only in terms of $g_1,\dots,g_{n-1}$ and $t_1$. It is straightforward to see that the obtained presentation is equivalent to the following: the group $B_n\ltimes\Z^n$ is generated by $g_1,\dots,g_{n-1}$ and $t_1$ with defining relations:
\[\text{\rm{\textbf{(Bra1)}}}\,,\ \ \ g_1t_1g_1^{-1}=g_1^{-1}t_1g_1\,,\ \ \ g_1t_1g_1^{-1}t_1=t_1g_1t_1g_1^{-1}\ \ \ \ \text{and}\ \ \ g_it_1=t_1g_i\ (i=2,\dots,n-1)\,,\]
where $\rm{\textbf{(Bra1)}}$ consists of the two first lines in $\rm{\textbf{(R1)}}$.

\item[\textbf{(iii)}] Pursuing further the preceding remark, we obtain a presentation of the Yokonuma--Hecke algebra only in terms of $g_1,\dots,g_{n-1}$ and $t_1$. Indeed, if we define $t_2,\dots,t_{n-1}$ by $t_{i+1}:=g_it_ig_i^{-1}$, then $Y_{d,n}$ is the quotient of the group algebra of $B_n\ltimes\Z^n$ (with the presentation above) by the relations $t_1^d=1$ and $g_i^2=1+(q-q^{-1})e_ig_i\,$, where $e_i:=\frac{1}{d}\sum_{1\leq s\leq d}t_i^st_{i+1}^{-s}$. 
We indicate that, in $Y_{d,n}$, the relation $g_1t_1g_1^{-1}=g_1^{-1}t_1g_1$ is not necessary as a defining relation, since it is actually a consequence of the others.

Consider the specializations $q\mapsto\pm1$ of $Y_{d,n}$. Denoting $s_i$ the specialization of $g_i$, the generators are now $t_1,s_1,\dots,s_{n-1}$ and the relations become:
\[\text{\rm{\textbf{(Brb1)}}}\,,\ \ \ s_i^2=1\ (i=1,\dots,n-1)\,,\ \ \ t_1^d=1\,,\ \ \ s_1t_1s_1t_1=t_1s_1t_1s_1\ \ \ \ \text{and}\ \ \ s_it_1=t_1s_i\ (i=2,\dots,n-1)\,.\]
This recovers the standard presentation of the complex reflection group $G(d,1,n)$ and the well-known fact that $Y_{d,n}$ is a deformation of $G(d,1,n)$.

\end{itemize}

\end{Rem}

\subsection{Deformations of complex reflection groups of type $G(d,p,n)$}
 
In the classification of irreducible complex reflection groups by Shephard--Todd, there is a collection of $34$ exceptional cases, together with a unique infinite series $G(d,p,n)$, where $d,p,n\geq1$ with $p$ dividing $d$. We refer to \cite{Br} for details.  The braid group of type $G(d,p,n)$ has a presentation by generators $s_0$, $s_1$, \ldots, $s_{n-1}$ and $\alpha_0$ subject to the following relations :
\begin{itemize}
\item \rm{\textbf{(Br1)}}\ : \ \ \ $\left\{\begin{array}{rclcl}
s_is_j & = & s_js_i &\quad & \text{for $i,j\in\{1,\ldots,n-1\}$ such that $\vert i-j\vert > 1$\,,}\\[0.2em]
s_is_{i+1}s_i & = & s_{i+1}s_is_{i+1} && \text{for $i\in\{1,\ldots,n-2\}$\,,}
\end{array}\right.$

\item \rm{\textbf{(Br2)}}\ : \ \ \   $\left\{\begin{array}{rclcl}
s_is_0 & = & s_0s_i &\quad & \text{for $i\in\{3,\ldots,n-1\}$\,,}\\[0.2em]
s_0s_{2}s_0 & = & s_{2}s_0s_{2}\,, &&
\end{array}\right.$

\item \rm{\textbf{(Br3)}}\ : \ \ \  $(s_2 s_0 s_1)^2=(s_0 s_1 s_2)^2$\ ,

\item \rm{\textbf{(Br4)}}\ : \ \ \  $\underbrace{s_1 \alpha_0 s_0 s_1 s_0 s_1 \ldots}_{p+1 \text{ terms}} = \underbrace{\alpha_0 s_0 s_1 s_0 s_1 \ldots}_{p+1 \text{ terms}}$\ ,

\item \rm{\textbf{(Br5)}}\ : \ \ \   $\left\{\begin{array}{rclcl}
\alpha_0 & = & 1\,, &\quad & \text{if $p=d$\,,}\\[0.2em]
\alpha_0 s_0 s_1 & = & s_0 s_1 \alpha_0\,, &&\\[0.2em]
\alpha_0 s_i  & = & s_i \alpha_0 && \text{for $i\in\{2,\ldots,n-1\}$\,.}
\end{array}\right.$
\end{itemize}
The complex reflection group $G(d,p,n)$ is then the quotient of the braid group of type $G(d,p,n)$ by the relations:
\begin{equation}\label{quot-gr}
\alpha_0^{d/p}=1\ \ \ \ \ \ \ \text{and}\ \ \ \ \ \ s_i^2=1\,,\ \ i=0,1,\dots,n-1\,.
\end{equation}

\begin{Rem}\label{rem-braid}$\ $

\begin{itemize}
\item[\textbf{(i)}] Let $p=1$. In this case, (Br4) gives that $s_0=\alpha_0^{-1}s_1\alpha_0$ and therefore, $s_0$ can be removed from the set of generators. It is easy to check that the second line of (Br5) becomes then $s_1\alpha_0s_1\alpha_0=\alpha_0s_1\alpha_0 s_1$, and moreover, Relations (Br2) and (Br3) are consequences of the other defining relations. Therefore, if $d=1$, the braid group of type $G(1,1,n)$ is generated by $s_1,\dots,s_{n-1}$ with Relations (Br1), and is then simply the usual braid group (of type A). If $d>1$, the braid group of type $G(d,1,n)$ is generated by $\alpha_0,s_1,\dots,s_{n-1}$ with relations (Br1) together with
\[s_1\alpha_0s_1\alpha_0=\alpha_0s_1\alpha_0 s_1\ \ \ \ \ \text{and}\ \ \ \ \ \ \alpha_0s_i=s_i\alpha_0\,,\ \ i=2,\dots,n-1\,,\]
and is then simply the braid group of type $B$ (also called affine braid group).

\item[\textbf{(ii)}] Let $p=d$. In this case, $\alpha_0$ is equal to $1$ and thus $\alpha_0$ can be removed from the set of generators. Relations (Br5) disappear and Relation (Br4) becomes $\underbrace{s_1s_0 s_1 \ldots}_{p \text{ terms}} = \underbrace{s_0 s_1 s_0\ldots}_{p\text{ terms}}$.

Assume moreover that $p=d=2$. Then (Br4) is simply $s_0s_1=s_1s_0$ and it is easy to check and well-known that in this case (Br3) is a consequence of the other defining relations. Therefore  the braid group of type $G(2,2,n)$  is simply the braid group of type $D_n$.
 \end{itemize}
 \end{Rem}

\paragraph{Quotients of  braid groups of type $G(d,p,n)$.} We now formulate  the main result of this section, showing in particular that the subalgebra of fixed points $Y_{d,n}^{\Z/p\Z}$ is a deformation of the group $G(d,p,n)$, and moreover expressing it as a quotient of the braid group of type $G(d,p,n)$.

\begin{Th}\label{theo-def}
The algebra $Y_{d,n}^{\Z/p\Z}$ is isomorphic to the quotient of the group algebra of the braid group of type $G(d,p,n)$ by the relations
\begin{equation}\label{quot-Y}
(s_0s_1^{-1})^d=1\,,\ \ \ \ \ \alpha_0^{d/p}=1\ \ \ \ \ \ \text{and}\ \ \ \ \ \ s_i^2=1+(q-q^{-1})e_is_i\ \ (i=0,1,\dots,n-1)\,,
\end{equation}
where $e_0=e_1:=\displaystyle\frac{1}{d}\sum_{1\leq s\leq d}(s_0s_1^{-1})^s$ and $e_i:=s_{i-1}s_ie_{i-1}s_i^{-1}s_{i-1}^{-1}$\,, for $i=2,\dots,n-1$.
\end{Th}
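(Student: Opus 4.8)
The plan is to establish the isomorphism by comparing the presentation of $Y_{d,n}^{\Z/p\Z}$ already obtained in Lemma \ref{lem-pres} with the quotient of the braid group of type $G(d,p,n)$ described in the statement. The natural candidate for the isomorphism is the map sending $s_i\mapsto g_i$ for $i=1,\dots,n-1$, sending $\alpha_0\mapsto a_0$, and sending $s_0\mapsto a_1^{-1}g_1$ (equivalently $s_0s_1^{-1}\mapsto a_1^{-1}$, so that $a_1\mapsto s_1s_0^{-1}$ and the relation $(s_0s_1^{-1})^d=1$ matches $a_1^d=1$, and $e_i$ matches $e_i$). First I would check that this assignment is well-defined, i.e. that the images of $g_1,\dots,g_{n-1},a_0,s_0$ satisfy all the relations defining the quotient of the braid group of type $G(d,p,n)$: the braid relations (Br1) are (R1) minus the quadratic relation; the quadratic relations in (\ref{quot-Y}) become the quadratic relations for $g_i$ (and for $i=0$, a short computation using $s_0=a_1^{-1}g_1$ and relation (R4) for $j=1$); relations (Br2), (Br3), (Br4), (Br5) must each be verified inside $Y_{d,n}^{\Z/p\Z}$ using relations (R1)--(R4). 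This gives a surjective algebra homomorphism from the quotient of the braid group algebra onto $Y_{d,n}^{\Z/p\Z}$, surjectivity being clear since the $g_i$, $a_0$ and $a_1=s_1s_0^{-1}$ generate (by Lemma \ref{lem-pres}).

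Next I would go in the reverse direction: show that the abstract quotient algebra, call it $A$, is spanned by (the images of) the set (\ref{basisfixed-a}), which has cardinality $d^n n!/p=\operatorname{rk}(Y_{d,n}^{\Z/p\Z})$. For this I would introduce in $A$ the elements $a_1:=s_1s_0^{-1}$ and then, recursively, $a_i:=g_{i-1}g_ia_{i-1}g_i^{-1}g_{i-1}^{-1}$ for $i=2,\dots,n-1$ (with $g_i:=s_i$), mirroring Lemma \ref{lem-pres}. Using Remark \ref{rem-lem}(i) — which says precisely that the "group-level" version of the reduction in Lemma \ref{lem-pres} is valid — together with the quadratic and order relations in (\ref{quot-Y}), I would deduce that $A$ satisfies all of (R1)--(R4) for the $g_i$'s and the $a_i$'s, hence that $A$ is spanned by the elements $a_0^{\beta_0}a_1^{\beta_1}\cdots a_{n-1}^{\beta_{n-1}}g_w$ with the exponent ranges of (\ref{basisfixed-a}). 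Since $\dim A\le d^nn!/p=\operatorname{rk}Y_{d,n}^{\Z/p\Z}$ and we already have a surjection $A\twoheadrightarrow Y_{d,n}^{\Z/p\Z}$, the surjection is an isomorphism.

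The main obstacle I expect is the bookkeeping in the first half: checking that the braid-type relations (Br2), (Br3) and especially (Br4) and the second line of (Br5) hold in $Y_{d,n}^{\Z/p\Z}$ under the substitution $s_0=a_1^{-1}g_1$. Relation (Br4), an equality of two length-$(p{+}1)$ alternating words in $s_0,s_1$ modified by a leading $\alpha_0=a_0$, has to be translated into an identity among $g_1$, $a_0$ and $a_1$ and then verified from (R3), (R4) and the commutation $a_0a_1=a_1a_0$; this is the computation most likely to require care, and it is essentially where the specific shape of the relation $(s_0s_1^{-1})^d=1$ and the exponent $p$ in (R3) ($g_1a_0g_1^{-1}=a_0a_1^p$) get used. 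The verification of (Br3) (a braid-type relation of length $4$) is similar in spirit but shorter. Everything else — well-definedness of the $a_i$, the order relations, surjectivity, the spanning/dimension count — is routine given Lemma \ref{lem-pres}, Remark \ref{rem-lem}(i) and Proposition \ref{base}.
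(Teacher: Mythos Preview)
Your strategy is correct and close in spirit to the paper's, but the paper organizes the argument differently by inserting an intermediate algebra $\widetilde{Y}_{d,p,n}$, defined by a presentation (R$'$1)--(R$'$5) which is almost the braid-group quotient: same generators $\tg_0,\tg_1,\dots,\tg_{n-1},\ta_0$, but with (Br3), (Br4) and the second line of (Br5) rewritten using $\tg_0\tg_1^{-1}$ in place of $s_0s_1$. The paper first matches $\widetilde{Y}_{d,p,n}$ with $Y_{d,n}^{\Z/p\Z}$ via Lemma~\ref{lem-pres} (your map, up to the harmless change $a_1\leftrightarrow a_1^{-1}$; the paper takes $\tg_0\mapsto a_1g_1$), and then separately shows that $\widetilde{Y}_{d,p,n}$ coincides with the quotient of the braid group by~(\ref{quot-Y}). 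This two-step organization has a real payoff: the passage from (Br3), (Br4), (Br5) to their (R$'$) counterparts all hinges on the single observation that $(s_0s_1^{-1})^d=1$ together with the two quadratic relations forces $s_0^2=s_1^2$, hence $s_0s_1^{-1}=s_0^{-1}s_1$ and $s_1s_0^{-1}=s_1^{-1}s_0$; isolating this makes every subsequent manipulation of alternating words in $s_0,s_1$ transparent.

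One point you underestimate: your second half is \emph{not} routine. To invoke Remark~\ref{rem-lem}(i) inside $A$ you must first verify in $A$ the short list of relations from Lemma~\ref{lem-pres}, in particular (R3) in the form $s_1\alpha_0 s_1^{-1}=\alpha_0(s_1s_0^{-1})^p$, as well as (R4) for $j=1$ and the commutation $a_1a_2=a_2a_1$. Deriving these from (Br3), (Br4), (Br5) and~(\ref{quot-Y}) is precisely where the $s_0^2=s_1^2$ trick is needed, and it is the mirror of the computation you flag in the first half --- not a consequence of Lemma~\ref{lem-pres} or Remark~\ref{rem-lem}(i), which only tell you what to do \emph{after} those relations are established. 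In the paper this work is the content of the passage following Proposition~\ref{prop-def}.
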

The rest of this section will be devoted to the proof of the theorem. Before that, we give some remarks and examples, focusing on $G(d,1,n)$ in item (i) and on the Weyl group of type $D_n$ (that is, $G(2,2,n)$) in item (ii). 
\begin{Rem}$\ $

\begin{itemize}
\item[\textbf{(i)}] Let $p=1$. We pursue further Remark \ref{rem-braid}(i). Note first that if $d=1$ then $s_0=s_1$, $e_i=1$ and the presentation of the theorem is nothing but the usual presentation of $Y_{1,n}$ (which is the Hecke algebra of type A).

Let $d>1$. The theorem gives then a presentation of the Yokonuma--Hecke algebra $Y_{d,n}$ as a quotient of the group algebra of the braid group of type $B$. For convenience, let us rename in this remark $\alpha_0=t_1$. Recall that here $s_0$ is removed from the set of generators since $s_0=t_1^{-1}s_1t_1$. Then in (\ref{quot-Y}) the relation expressing $s_0^2$ reads $(t_1^{-1}s_1t_1)^2=1+(q-q^{-1})e_1t_1^{-1}s_1t_1$. This is equivalent to $t_1e_1=e_1t_1$ which is equivalent to $t_1s_1^2=s_1^2t_1$, which is in turn equivalent to $s_1^{-1}t_1s_1=s_1t_1s_1^{-1}$. 

Moreover, taking into account the braid relation $t_1s_1t_1s_1=s_1t_1s_1t_1$, this easily implies that $t_1s_1t_1s_1^{-1}=s_1t_1s_1^{-1}t_1$, and in turn that $(t_1^{-1}s_1t_1s_1^{-1})^d=1$ using $t_1^d=1$.

We conclude that, in this case, an equivalent formulation of the theorem is that the Yokonuma--Hecke algebra $Y_{d,n}$ is isomorphic to the quotient of the group algebra of the braid group of type $B$ by the relations:
\begin{equation}\label{quot-YH}
s_1^{-1}t_1s_1=s_1t_1s_1^{-1}\,,\ \ \ \ \ t_1^d=1\ \ \ \ \ \ \text{and}\ \ \ \ \ \ s_i^2=1+(q-q^{-1})e_is_i\ \ (i=1,\dots,n-1)\,,
\end{equation}
where $e_1:=\displaystyle\frac{1}{d}\sum_{1\leq s\leq d}(t_1^{-1}s_1t_1s_1^{-1})^s$ and $e_i:=s_{i-1}s_ie_{i-1}s_i^{-1}s_{i-1}^{-1}$\,, for $i=2,\dots,n-1$.

The correspondence with the usual presentation of $Y_{d,n}$ is simply $t_1\leftrightarrow t_1$ and $s_i \leftrightarrow g_i$ for $i=1,\dots,n-1$. The proof of the general result (to be given below) is much simpler in this case. A sketch is as follows. On one hand, note that all relations of the braid group of type B and relations (\ref{quot-YH}) are satisfied in $Y_{d,n}$. On the other hand, we first reduced the number of generators and relations in the presentation of $Y_{d,n}$: this was done in Lemma \ref{lem-pres} in general and made explicit for $p=1$ in Remark \ref{rem-lem}(iii). So it remains to check that the relation  $t_1s_1t_1s_1^{-1}=s_1t_1s_1^{-1}t_1$ is satisfied in the quotient by (\ref{quot-YH}). This is obvious using $s_1^{-1}t_1s_1=s_1t_1s_1^{-1}$ and $t_1s_1t_1s_1=s_1t_1s_1t_1$.

\item[\textbf{(ii)}] Let $p=d=2$. We pursue further Remark \ref{rem-braid}(ii), where we recalled that the braid group of type $G(2,2,n)$ is the braid group of type $D_n$. In this case, it is easy to see that, in (\ref{quot-Y}), the relations expressing $s_0^2$ and $s_1^2$ are: $s_0^2=s_1^2=1+\frac{(q-q^{-1})}{2}(s_0+s_1)$. Using that $s_0$ and $s_1$ commutes, the fact that $s_0^2=s_1^2$ implies that $(s_0s_1^{-1})^2=1$. As a consequence, the theorem can be formulated in this case as follows: the algebra $Y_{2,n}^{\Z/2\Z}$ is the quotient of the braid group of type $D_n$ by the relations:
\[s_0^2=s_1^2=1+\frac{(q-q^{-1})}{2}(s_0+s_1)\ \ \ \text{and}\ \ \ s_i^2=1+\frac{(q-q^{-1})}{2}(S_i+s_i)\,,\ \ i=2,\dots,n-1\,,\]
where $S_1:=s_0$ and $S_{i}:=s_{i-1}s_{i}S_{i-1}s_{i}^{-1}s_{i-1}^{-1}$ for $i=2,\dots,n-1$.
 \end{itemize}
 \end{Rem}

\paragraph{Proof of the theorem.} As an intermediary step towards the proof of Theorem \ref{theo-def}, we provide another presentation of $Y_{d,n}^{\Z/p\Z}$. To do so, we will consider the algebra $\widetilde{Y}_{d,p,n}$ with generators  $\widetilde{g}_0$, $\widetilde{g}_1$, \ldots, $\widetilde{g}_{n-1}$ and $\widetilde{a}_0$ subject to the following relations :
\begin{itemize}
\item \rm{\textbf{(R'1)}}\ :\ \ \ $\left\{\begin{array}{rclcl}
\tg_i\tg_j & = & \tg_j\tg_i &\quad & \text{for $i,j\in\{1,\ldots,n-1\}$ such that $\vert i-j\vert > 1$\,,}\\[0.2em]
\tg_i\tg_{i+1}\tg_i & = & \tg_{i+1}\tg_i\tg_{i+1} && \text{for $i\in\{1,\ldots,n-2\}$\,,}\\[0.2em]
\tg_i^2  & = & 1 + (q-q^{-1}) \, \te_{i} \, \tg_i &\quad& \text{for $i\in\{1,\ldots,n-1\}$\,,}
\end{array}\right.$

\item \rm{\textbf{(R'2)}}\ :\ \ \   $\left\{\begin{array}{rclcl}
\tg_i\tg_0 & = & \tg_0\tg_i &\quad & \text{for $i\in\{3,\ldots,n-1\}$\,,}\\[0.2em]
\tg_0\tg_{2}\tg_0 & = & \tg_{2}\tg_0\tg_{2}\,, &&\\[0.2em]
\tg_0^2  & = & 1 + (q-q^{-1}) \, \te_{0} \, \tg_0\,, &&
\end{array}\right.$

\item \rm{\textbf{(R'3)}}\ :\ \ \  $\widetilde{g}_2 \widetilde{g}_0 \widetilde{g}_1^{-1}\widetilde{g}_2^{-1} \widetilde{g}_0 \widetilde{g}^{-1}_1=\widetilde{g}_0 \widetilde{g}^{-1}_1 \widetilde{g}_2\widetilde{g}_0 \widetilde{g}^{-1}_1 \widetilde{g}^{-1}_2$\ ,

\item \rm{\textbf{(R'4)}}\ :\ \ \  $\underbrace{\widetilde{g}_1 \widetilde{a}_0 \widetilde{g}^{-1}_0 \widetilde{g}_1 \widetilde{g}^{-1}_0 \widetilde{g}_1 \ldots}_{p+1 \text{ terms}} = \underbrace{\widetilde{a}_0 \widetilde{g}_0 \widetilde{g}^{-1}_1 \widetilde{g}_0 \widetilde{g}^{-1}_1 \ldots}_{p+1 \text{ terms}}$\ ,

\item \rm{\textbf{(R'5)}}\ :\ \ \   $\left\{\begin{array}{rclcl}
\widetilde{a}_0^{d/p} & = & 1\,, &\quad & \\[0.2em]
\widetilde{a}_0 \widetilde{g}_0 \widetilde{g}_1^{-1} & = & \widetilde{g}_0 \widetilde{g}_1^{-1} \widetilde{a}_0\,, &&\\[0.2em]
\widetilde{a}_0 \widetilde{g}_i  & = & \widetilde{g}_i \widetilde{a}_0 && \text{for $i\in\{2,\ldots,n-1\}$\,,}
\end{array}\right. $
\end{itemize}
where $\te_i$ for $i=0,1,\ldots,n-1$ are defined as follows :
\begin{equation}\label{tei}
\widetilde{e}_0=\widetilde{e}_1=\frac {1}{d}\sum_{1\leq s\leq d} (\widetilde{g}_0 \widetilde{g}^{-1}_1)^s\ \ \ \ \ \ \text{and}\ \ \ \ \ \ \widetilde{e}_i:=\widetilde{g}_{i-1}\widetilde{g}_i \widetilde{e}_{i-1} \widetilde{g}^{-1}_{i}\widetilde{g}^{-1}_{i-1}\ \ (i=2,\dots,n-1)\,.
\end{equation}

\begin{Prop}\label{prop-def}
The algebra $\widetilde{Y}_{d,p,n}$  is isomorphic to $Y_{d,n}^{\mathbb{Z}/p\mathbb{Z}}$\ .
\end{Prop}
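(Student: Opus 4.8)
The strategy is to produce mutually inverse homomorphisms between $\widetilde{Y}_{d,p,n}$ and $Y_{d,n}^{\Z/p\Z}$, using for the latter the reduced presentation of Lemma~\ref{lem-pres} (generators $g_1,\dots,g_{n-1},a_0,a_1$). Throughout put $u:=\tg_0\tg_1^{-1}$. First I would define $f\colon\widetilde{Y}_{d,p,n}\to Y_{d,n}$ on generators by $\tg_i\mapsto g_i$ ($i=1,\dots,n-1$), $\tg_0\mapsto t_1^{-1}g_1t_1$ and $\ta_0\mapsto t_1^p$, and check that (R'1)--(R'5) hold for these images in $Y_{d,n}$. Nearly all of this is immediate; the only real computations are the braid relation $\tg_0\tg_2\tg_0=\tg_2\tg_0\tg_2$ in (R'2) (which follows from $g_1g_2g_1=g_2g_1g_2$ since $t_1$ commutes with $g_2$), the quadratic relation for $\tg_0$ in (R'2), relation (R'3), and relation (R'4). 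These become transparent once one notes that $f(u)=t_1^{-1}g_1t_1g_1^{-1}=t_1^{-1}t_2$, so that $f(\te_0)=f(\te_1)=e_1$, $f(\tg_0)^2=t_1^{-1}g_1^2t_1=1+(q-q^{-1})e_1f(\tg_0)$ (using that $\mathcal{T}_{d,n}$ is commutative), and (R'4) reduces, after moving the $t$'s past $g_1$, to $g_1t_1^pg_1^{-1}=t_1^p(t_1^{-1}t_2)^p$, i.e. $t_2^p=t_2^p$. By \eqref{act-ti} the image of $f$ lies in $Y_{d,n}^{\Z/p\Z}$, and since it contains $g_1,\dots,g_{n-1}$, $a_0=t_1^p$ and $a_1=t_1^{-1}t_2$, Lemma~\ref{lem-pres} shows that $f$ is surjective onto $Y_{d,n}^{\Z/p\Z}$.

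Conversely I would define $h\colon Y_{d,n}^{\Z/p\Z}\to\widetilde{Y}_{d,p,n}$ from the presentation of Lemma~\ref{lem-pres} by $g_i\mapsto\tg_i$ ($1\le i\le n-1$), $a_0\mapsto\ta_0$ and $a_1\mapsto u$; the recursively defined elements $a_i$ and $e_i$ ($i\ge2$) are then sent to the corresponding elements of $\widetilde{Y}_{d,p,n}$, consistently with \eqref{tei}. The point is to verify that the finitely many defining relations of Lemma~\ref{lem-pres} hold in $\widetilde{Y}_{d,p,n}$. Some come for free: (R1) is literally (R'1); the relations $\ta_0^{d/p}=1$, $\ta_0u=u\ta_0$ and $\tg_i\ta_0=\ta_0\tg_i$ ($i\ge2$) are exactly (R'5), and the last of these is (R3) for $i\ge2$.

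The crux---and the step I expect to be the main obstacle---is to show that, modulo (R'1), (R'2) and (R'5), relation (R'4) is equivalent to the single conjugation identity $\tg_1\ta_0\tg_1^{-1}=\ta_0u^p$ (which is $h$ applied to the remaining part of (R3)). One direction is a purely formal rewriting: peeling the letters off the right-hand ends of (R'4) and repeatedly using $\tg_0=u\tg_1$ collapses both sides to $\tg_1\ta_0\tg_1^{-1}=\ta_0u^p$ (the pattern is already visible for $p=1,2,3$ and goes through for general $p$). Granting this, since $\ta_0$ commutes with $u$ by (R'5) it also commutes with $u^p=\ta_0^{-1}(\tg_1\ta_0\tg_1^{-1})$; hence $\ta_0^{-1}$ and $\tg_1\ta_0\tg_1^{-1}$ commute, and raising $u^p=\ta_0^{-1}(\tg_1\ta_0\tg_1^{-1})$ to the power $d/p$ yields $u^d=1$ (using $\ta_0^{d/p}=1$). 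Therefore $\te_0=\te_1=\frac{1}{d}\sum_{1\le s\le d}u^s$ satisfies $u\te_0=\te_0$, so $\te_0\tg_0=\te_0u\tg_1=\te_0\tg_1$; comparing the quadratic relations in (R'1) and (R'2) then forces $\tg_0^2=\tg_1^2$, and consequently $\tg_1u\tg_1^{-1}=\tg_1\tg_0\tg_1^{-2}=\tg_1\tg_0^{-1}=u^{-1}$, which is $h$ applied to (R4) with $i=j=1$.

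The remaining relations of Lemma~\ref{lem-pres}---(R4) for $j=1$ and $i\ge2$ (using the braid relation $\tg_2\tg_0\tg_2=\tg_0\tg_2\tg_0$ of (R'2) together with $\tg_0^2=\tg_1^2$), the commutations $\tg_iu\tg_i^{-1}=u$ for $i\ge3$ (from (R'1)--(R'2)), and $a_1a_2=a_2a_1$ (from (R'3) together with $\tg_1u\tg_1^{-1}=u^{-1}$)---are checked by exactly the manipulations used in the proof of Lemma~\ref{lem-pres}, now performed inside $\widetilde{Y}_{d,p,n}$. This establishes that $h$ is a well-defined homomorphism. Since $h\circ f$ fixes each generator of $\widetilde{Y}_{d,p,n}$ (for instance $h(f(\tg_0))=h(a_1g_1)=u\tg_1=\tg_0$), the map $f$ is injective; being also surjective onto $Y_{d,n}^{\Z/p\Z}$, it is the desired isomorphism.
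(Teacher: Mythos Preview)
Your proposal is correct and follows essentially the same approach as the paper. Your maps $f$ and $h$ coincide with the paper's $\Psi$ and $\Phi$ (noting that $t_1^{-1}g_1t_1=a_1g_1$ in $Y_{d,n}$), and your key verifications---rewriting (R'4) as $\tg_1\ta_0\tg_1^{-1}=\ta_0 u^p$, deducing $u^d=1$ and hence $\tg_0^2=\tg_1^2$, and matching (R'2), (R'3) with the (R4)/commutation relations of Lemma~\ref{lem-pres}---are exactly the computations carried out in the paper's proof.
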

\begin{proof}
We consider the following map from the set of generators of $Y_{d,n}^{\mathbb{Z}/p\mathbb{Z}}$ (from the presentation of  Lemma \ref{lem-pres}) to $\widetilde{Y}_{d,p,n}$:
$$\Phi \ :\ \ \ a_0\mapsto  \ta_0\,,\ \ \ \ \ a_1\mapsto\tg_0\tg_1^{-1}\,,\ \ \ \ \ \ g_i\mapsto\tg_i\,,\ i=1,\dots,n-1\,$$
and the following map from the set of generators of $\widetilde{Y}_{d,p,n}$ to $Y_{d,n}^{\mathbb{Z}/p\mathbb{Z}}$:
\[\Psi \ :\ \ \ \ta_0\mapsto a_0\,,\ \ \ \ \ \tg_0\mapsto a_1g_1\,,\ \ \ \ \ \ \tg_i\mapsto g_i\,,\ i=1,\dots,n-1\,.\]
We need to check that both maps extend to algebra homomorphisms. Then they will clearly be inverse to each other and thus provide the desired isomorphism. To do so, we use the presentation of $Y_{d,n}^{\mathbb{Z}/p\mathbb{Z}}$ given in Lemma \ref{lem-pres}.  We start with the following list of remarks:
\begin{itemize}
\item Relations (R'1) correspond to Relations (R1).
\item To see that Relation (R'4) corresponds to $g_1a_0g_1^{-1}=a_0a_1^p$\,, we add a factor $\tg_1^{-1}\tg_1$ in the left hand side of (R'4) to obtain $\tg_1\ta_0\tg_1^{-1}(\underbrace{\widetilde{g}_1 \widetilde{g}^{-1}_0 \widetilde{g}_1 \widetilde{g}^{-1}_0 \widetilde{g}_1 \ldots}_{p \text{ terms}})$.  This gives the desired relations thanks to the definitions of $\Phi$ and $\Psi$. 
\item  Relations (R'5) correspond to $a_0^{d/p}=1$, $a_0a_1=a_1a_0$ and $g_ia_0g_i^{-1}=a_0$ if $i\geq2$.
\item The first relation in (R'2) corresponds to $g_ia_1=a_1g_i$ if $i\geq3$. 
\item The second relation in (R'2) corresponds to $g_2a_1g_1g_2=a_1g_1g_2a_1g_1$. Now take the rightmost $g_1$ and send it to the left hand side. Then, using the braid relation between $g_1$ and $g_2$, one obtains $g_2a_1g_2^{-1}g_1g_2=a_1g_1g_2a_1$ which is equivalent to $g_2a_1g_2^{-1}=a_1a_2$.

\item Relation (R'3) corresponds to the fact that $g_2a_1g_2^{-1}$ commutes with $a_1$. Taking into account that $g_2a_1g_2^{-1}=a_1a_2$ (preceding item), it is equivalent to the fact that $a_1$ commutes with $a_2$.
\end{itemize}

As a consequence of the preceding list of remarks, to prove that $\Psi$ extends to a morphism, it remains only to check that the relation $\tg_0^2=1+(q-q^{-1})\te_0\tg_0$ is preserved. We thus need to show that  $(a_1g_1)^2=1+(q-q^{-1})e_1a_1g_1$. Using that $g_1a_1=a_1^{-1}g_1$, we see that the left hand side is $g_1^2$, and using that $a_1^d=1$, we see that $e_1a_1=e_1$. This shows the desired assertion.

Now, to prove that $\Phi$ extends to a morphism and to conclude the proof, it remains only to check that the relation $g_1a_1g_1^{-1}=a_1^{-1}$ is preserved by $\Phi$. We thus have to show that  $\tg_1\tg_0\tg_1^{-1}\tg_1^{-1}=\tg_1\tg_0^{-1}$, which is equivalent to
\begin{equation}\label{g0g1}
\tg_0^2=\tg_1^2\ .
\end{equation}
By (R'2), this is equivalent to the relation $\te_1\tg_0=\te_1\tg_1$. 
We have already seen above that (R'4) can be written $\tg_1\ta_0\tg_1^{-1}=\ta_0(\tg_0\tg_1^{-1})^p$. Together with $\ta_0^{d/p}=1$ and the fact that $\ta_0$ and $\tg_0\tg_1^{-1}$ commute, this yields $(\tg_0\tg_1^{-1})^d=1$. Using this, we have then $\te_1\tg_0\tg_1^{-1}=\te_1$ which gives $\te_1\tg_0=\te_1\tg_1$ and thus (\ref{g0g1}) is satisfied.
\end{proof}

\begin{proof}[End of the proof of Theorem \ref{theo-def}]
By Proposition \ref{prop-def}, we need to prove that the quotient of the group algebra of the braid group of type $G(d,p,n)$ by Relations (\ref{quot-Y}) is isomorphic to the algebra $\widetilde{Y}_{d,p,n}$. We will prove that an isomorphism is given by $s_i\leftrightarrow\tg_i$ ($i=0,1,\dots,n-1$) and $\alpha_0\leftrightarrow\ta_0$.

\vskip .2cm
First we will check that Relations (R'1)--(R'5) are satisfied for the generators $s_0,s_1,s_2,\alpha_0$ in the considered quotient of the braid group (for the rest of the proof, $s_0,s_1,s_2,\alpha_0$ will denote the generators in the quotient). A verification is needed only for Relations (R'3), (R'4) and the second line in (R'5).

First note that $(s_0s_1^{-1})^d=1$ implies that $e_1s_0s_1^{-1}=e_1$ and thus $e_1s_0=e_1s_1$. This in turn means $s_0^2=s_1^2$, or in other forms, 
\begin{equation}\label{s0s1}
s_0s_1^{-1}=s_0^{-1}s_1\ \ \ \ \ \text{and}\ \ \ \ s_1s_0^{-1}=s_1^{-1}s_0\ .
\end{equation}
Now we proceed in several steps:
\begin{itemize}
\item  We deal first with (R'4). We start with (Br4) and write it on the form
\[s_1\alpha_0s_1^{-1}\cdot(\underbrace{s_1s_0s_1\ldots}_{p \text{ terms}})\cdot(\underbrace{\ldots s_0^{-1}s_1^{-1}s_0^{-1}}_{p \text{ terms}})=\alpha_0\ .\]
Using (\ref{s0s1}), we can reorganise the position of the negative exponents to obtain $s_1\alpha_0s_1^{-1}(s_1s_0^{-1})^p=\alpha_0$. This gives Relation (R'4) for $\alpha_0,s_0,s_1$.
\item  Then we prove that $\alpha_0s_0s_1^{-1}=s_0s_1^{-1}\alpha_0$. The braid relation $\alpha_0s_0s_1=s_0s_1\alpha_0$ asserts that $\alpha_0$ commutes with $s_0s_1$. Therefore, in order to prove that $\alpha_0$ commutes with $s_0s_1^{-1}$, it is enough to prove that $\alpha_0$ commutes with $s_1^{-2}$ (or, equivalently, with $s_1^2$). To do so, we note that we have proved in the preceding item that $s_1\alpha_0s_1^{-1}=\alpha_0 (s_0s_1^{-1})^p$. Besides, using (\ref{s0s1}), we have $s_1\cdot s_0s_1^{-1}\cdot s_1^{-1}=s_1\cdot s_0^{-1}s_1\cdot s_1^{-1}=s_1s_0^{-1}=(s_0s_1^{-1})^{-1}$. Therefore, we have
\[s_1^2\cdot\alpha_0\cdot s_1^{-2}=s_1\cdot\alpha_0(s_0s_1^{-1})^p\cdot s_1^{-1}=\alpha_0(s_0s_1^{-1})^p(s_0s_1^{-1})^{-p}=\alpha_0\ .\]
This concludes the verification of $\alpha_0s_0s_1^{-1}=s_0s_1^{-1}\alpha_0$.
\item Finally, we prove (R'3) for $s_0,s_1,s_2$. We start from (Br3), which asserts that $s_0s_1$ commutes with $s_2s_0s_1s_2$, and we make the following reasoning, where $[x,y]$ denotes $xy-yx$:
\[\begin{array}{llll} [s_0s_1\,,\,s_2s_0s_1s_2]=0\ \ \ \ & \Rightarrow \ \ \ \ & [s_0s_1^{-1}\,,\,s_2s_0s_1s_2]=0\ \ & \text{( if $[s_1^{-2}\,,\,s_2s_0s_1s_2]=0$ )}\\[0.5em]
& \Rightarrow \ \ \ \ & [s_0s_1^{-1}\,,\,s_2s_0s_1^{-1}s_2^{-1}]=0\ \ & \text{( if $[s_0s_1^{-1}\,,\,(s_2s_1^2s_2)^{-1}]=0$ )\ .}
\end{array}\]
The last assertion is Relation (R'3) for $s_0,s_1,s_2$.

So we need first to check that indeed $s_1^{-2}$ (or, equivalently, $s_1^2$) commutes with $s_2s_0s_1s_2$. This follows from the following calculation
\[s_2s_0s_1s_2\cdot s_1=s_2s_0\cdot s_2\cdot s_1s_2=s_0\cdot s_2s_0s_1s_2\ ,\]
together with $s_0^2=s_1^2$, which was stated in (\ref{s0s1}).

Then it remains to check that $s_0s_1^{-1}$ commutes with $(s_2s_1^2s_2)^{-1}$ (or, equivalently, with $s_2s_1^2s_2$). In fact, both $s_0$ and $s_1$ commute with $s_2s_1^2s_2$. For $s_1$, it follows from the braid relation between $s_1$ and $s_2$. For $s_0$, it follows from the fact that $s_1^2=s_0^2$ (this is (\ref{s0s1}) again) and the braid relation between $s_0$ and $s_2$.
\end{itemize}

\vskip .2cm
Reciprocally, we need to prove that all the braid relations (Br1)--(Br5) together with Relations (\ref{quot-Y}) are satisfied by the generators of $\widetilde{Y}_{d,p,n}$. The only non-trivial relations to check are:
\[ (\tg_0\tg_1)^d=1\,,\ \ \ \ \ta_0\tg_0\tg_1=\tg_0\tg_1\ta_0\,,\ \ \ \ \underbrace{\widetilde{g}_1 \widetilde{a}_0 \widetilde{g}_0 \widetilde{g}_1 \widetilde{g}_0 \widetilde{g}_1 \ldots}_{p+1 \text{ terms}} = \underbrace{\widetilde{a}_0 \widetilde{g}_0 \widetilde{g}_1 \widetilde{g}_0 \widetilde{g}_1 \ldots}_{p+1 \text{ terms}}\ \ \ \text{and}\ \ \ (\tg_2\tg_0\tg_1)^2=(\tg_0\tg_1\tg_2)^2\ .\]
By Proposition \ref{prop-def} we can check these relations in $Y_{d,n}$, where $\ta_0,\tg_0,\tg_1,\tg_2$ correspond respectively to $t_1^p,t_1^{-1}t_2g_1,g_1,g_2$. This is immediate for the two first relations and straightforward for the two remaining ones. Alternatively, for the two last relations, one can use that $(\tg_0\tg_1)^d=1$ to obtain the analogues of (\ref{s0s1}) and then reverse the arguments used to prove (R'4) and (R'3) in the first part of the proof. We skip the details.
\end{proof}

\subsection{Parametrization of the simple $\mathbb{C}_{\theta} Y_{d,n}^{\mathbb{Z}/p\mathbb{Z}}$-modules}\label{action1}

Let  $\theta : \mathbb{C}[q,q^{-1}] \to \mathbb{C}$ be a specialization. We now   apply the results of Section \ref{clifford} in order to study the representation theory of  $\mathbb{C}_{\theta} Y_{d,n}^{\mathbb{Z}/p\mathbb{Z}}$.
 
 Let us fix $\ulambda \in \Lambda^e_{\mu}$ with $\mu\in \operatorname{Comp}_d (n)$. 
 For convenience we write $\ulambda=(\ulambda^1,\ldots,\ulambda^p)$ where each 
  $\ulambda^i$ consists in $d/p$-partitions of $\mu_{(d/p-1)i+1}+\ldots+\mu_{di/p}$.  
  We set $s:=s (\ulambda)$ to be the minimal integer such that for all $i\in \{1,\ldots, p\}$, we have $\ulambda^i=\ulambda^{i+s}$ (where the 
   indices are understood modulo $p$). 
   
   The inertia subgroup $\mathfrak{H}_{V^\ulambda} (\mathbb{Z}/p\mathbb{Z})$ is then 
    generated by $\sigma_{d/p}^s$,  where the permutation $\sigma_{d/p}$ of $\mathfrak{S}_d$ was introduced at the beginning of Section \ref{subsec-51}. So $\mathfrak{H}_{V^\ulambda} (\mathbb{Z}/p\mathbb{Z})$ is isomorphic to $\mathbb{Z}/(p/s) \mathbb{Z}$ and thus its irreducible representations are of all one dimensional and 
     naturally indexed by $\{1,\ldots,p/s\}$ 
     $$\textrm{Irr} (\mathfrak{H}_{V^\ulambda} (\mathbb{Z}/p\mathbb{Z}))
     =\{S^{i}\ |\ i=1,\ldots,p/s(\ulambda)\}$$
We are now ready to apply Proposition \ref{Cliff}.

\begin{Th}
The simple $\C_{\theta}Y_{d,n}^{\mathbb{Z}/p\mathbb{Z}}$-modules are given by the set 
$$\{ V^{\ulambda}_{S^i}\ |\ \ulambda \in \Lambda_{\mu}^e, \mu\in \operatorname{Comp}_d (n), i=1,\ldots,p/s(\ulambda)\}$$
and we have $V^{\ulambda}_{S^{i}}\simeq V^{\ulambda'}_{S^{i'}}$ if and only if 
     $\ulambda$ and $\ulambda'$ are in the same $\mathbb{Z}/p\mathbb{Z}$-orbit and $i=i'$. In addition, we have
     $$\operatorname{dim}_{\mathbb{C}} (V^{\ulambda}_{S^{i}}) =\frac{y (\ulambda,e)s(\ulambda)}{p}$$
\end{Th}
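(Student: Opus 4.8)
The plan is to apply Proposition \ref{Cliff} with $G=\mathbb{Z}/p\mathbb{Z}=\langle\sigma_{d/p}\rangle$, so everything reduces to three tasks: (1) identify the inertia group $\mathfrak{H}_{V^\ulambda}(\mathbb{Z}/p\mathbb{Z})$ and its irreducibles, (2) translate the isomorphism criterion of Proposition \ref{RR} into the stated condition, and (3) compute the dimension. For (1), the key input is Lemma \ref{act-Sd2}: applying $\Phi_{\sigma_{d/p}^{-1}}$ (or iterating) shows that $(V^\ulambda)^{\sigma_{d/p}^{j}}\cong V^{\ulambda^{\sigma_{d/p}^{j}}}$, where the action of $\sigma_{d/p}^{j}$ on the multipartition $\ulambda$ cyclically shifts the $d/p$-blocks $(\ulambda^1,\dots,\ulambda^p)$ by $j$ positions. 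Since two simple $\mathbb{C}_\theta Y_{d,n}$-modules $V^\ulambda$ and $V^{\ulambda'}$ are isomorphic iff $\ulambda$ and $\ulambda'$ are equal (not just orbit-equivalent) — this follows from the parametrization $\mathrm{Irr}(\mathbb{C}_\theta Y_{d,n})=\{V^\ulambda\}$ being a genuine indexing — we get $\sigma_{d/p}^j\in\mathfrak{H}_{V^\ulambda}(\mathbb{Z}/p\mathbb{Z})$ iff the cyclic shift by $j$ fixes $(\ulambda^1,\dots,\ulambda^p)$, i.e.\ iff $s(\ulambda)\mid j$. Hence $\mathfrak{H}_{V^\ulambda}(\mathbb{Z}/p\mathbb{Z})=\langle\sigma_{d/p}^{s(\ulambda)}\rangle\cong\mathbb{Z}/(p/s(\ulambda))\mathbb{Z}$, a cyclic group, whose irreducibles are the $p/s(\ulambda)$ one-dimensional characters $S^1,\dots,S^{p/s(\ulambda)}$.

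For (2), I would feed this into Proposition \ref{RR}. That proposition says $V^\ulambda_{S^i}\simeq V^{\ulambda'}_{S^{i'}}$ iff there is $g\in\mathbb{Z}/p\mathbb{Z}$ with $V^\ulambda\simeq (V^{\ulambda'})^g$ and, after identifying the (conjugate, hence here equal cyclic) inertia groups, $S^i\simeq S^{i'}$. The first condition, by the discussion above, says precisely that $\ulambda$ and $\ulambda'$ lie in the same $\mathbb{Z}/p\mathbb{Z}$-orbit under cyclic shift of the $p$ blocks; the second says $i=i'$ once we fix the natural labelling of the characters of $\mathbb{Z}/(p/s)\mathbb{Z}$ by $\{1,\dots,p/s\}$. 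One subtlety to address here: when $\ulambda$ and $\ulambda'$ are in the same orbit, the identification of $\mathfrak{H}_{V^\ulambda}$ with $\mathfrak{H}_{V^{\ulambda'}}$ is via conjugation by the shifting element, and one must check this identification is compatible with the chosen labelling of the one-dimensional characters — it is, because conjugation by a power of $\sigma_{d/p}$ is an inner automorphism of the cyclic group $\langle\sigma_{d/p}\rangle$-centralizer, hence acts trivially on characters. This is the only genuinely delicate point, and it is the one I would spell out carefully.

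For (3), Proposition \ref{Cliff} gives directly
\[
\dim_{\mathbb{C}}(V^\ulambda_{S^i})=\frac{\dim_{\mathbb{C}}(V^\ulambda)\cdot\dim_{\mathbb{C}}(S^i)}{\sharp\,\mathfrak{H}_{V^\ulambda}(\mathbb{Z}/p\mathbb{Z})}=\frac{y(\ulambda,e)\cdot 1}{p/s(\ulambda)}=\frac{y(\ulambda,e)\,s(\ulambda)}{p}\,,
\]
using that $\dim V^\ulambda=y(\ulambda,e)$ by definition, that $S^i$ is one-dimensional, and that $\sharp\,\mathfrak{H}_{V^\ulambda}(\mathbb{Z}/p\mathbb{Z})=p/s(\ulambda)$. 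Finally I would note that as $\ulambda$ ranges over $\bigcup_\mu\Lambda^e_\mu$ and $i$ over $\{1,\dots,p/s(\ulambda)\}$, Proposition \ref{RR} guarantees the $V^\ulambda_{S^i}$ (which are all nonzero by Proposition \ref{Cliff}) form a complete set of simple $\mathbb{C}_\theta Y_{d,n}^{\mathbb{Z}/p\mathbb{Z}}$-modules, with the isomorphism classes exactly as described. The main obstacle is thus not any computation but the bookkeeping in step (2): getting the identification of inertia groups and the resulting labelling of characters to match up cleanly.
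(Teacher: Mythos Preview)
Your proposal is correct and follows essentially the same approach as the paper: the paper's argument consists precisely of the paragraph preceding the theorem (identifying the inertia subgroup as $\langle\sigma_{d/p}^{s(\ulambda)}\rangle\cong\mathbb{Z}/(p/s(\ulambda))\mathbb{Z}$ with its $p/s(\ulambda)$ one-dimensional characters) followed by a direct appeal to Propositions~\ref{RR} and~\ref{Cliff}. Your write-up is in fact more detailed than the paper's, in particular in addressing the subtlety about the identification of inertia groups in step~(2), which the paper leaves implicit (and which is indeed harmless here since $\mathbb{Z}/p\mathbb{Z}$ is abelian).
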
   
    
\begin{Rem}

In the same spirit as in Section \ref{subsec-rep}, one can also show that the algebra $Y_{d,n}^{\mathbb{Z}/p\mathbb{Z}}$ 
 is not isomorphic to the usual cyclotomic Hecke algebra of type $G(d,p,n)$ (see the definition for example in \cite[\S 6.1]{CJ}), using the parametrization result  obtained in  \cite[Th. 6.4]{CJ} and looking at the specialization $\theta : \mathbb{C}[q,q^{-1}]\to \mathbb{C}$ such that $\theta (q)^2$ is a primitive root of order $d$.

\end{Rem}

\vspace{0.5cm}

\noindent {\bf Addresses:} \\

\noindent \textsc{Nicolas Jacon}, Universit\'e de Reims Champagne-Ardenne, UFR Sciences exactes et naturelles, Laboratoire de Math\'ematiques EA 4535
Moulin de la Housse BP 1039, 51100 Reims, FRANCE\\  \emph{nicolas.jacon@univ-reims.fr}\\
\\
\textsc{Lo\"\i c Poulain d'Andecy}, Universit\'e de Reims Champagne-Ardenne, UFR Sciences exactes et naturelles, Laboratoire de Math\'ematiques EA 4535
Moulin de la Housse BP 1039, 51100 Reims, FRANCE\\  \emph{loic.poulain-dandecy@univ-reims.fr}

\end{document}